\definecolor{darkblue}{RGB}{0,0,160}
\newtheorem{theorem}{Theorem}
\numberwithin{theorem}{section}
\newtheorem{proposition}[theorem]{Proposition}
\newtheorem{lemma}[theorem]{Lemma}
\newtheorem{corollary}[theorem]{Corollary}
\newtheorem{conjecture}[theorem]{Conjecture}
\newtheorem{problem}[theorem]{Problem}
\theoremstyle{definition}
\newtheorem{definition}[theorem]{Definition}
\newtheorem{remark}[theorem]{Remark}
\newtheorem{example}[theorem]{Example}
\newcommand{\RR}{\mathbb{R}}
\newcommand{\QQ}{\mathbb{Q}}
\newcommand{\ZZ}{\mathbb{Z}}
\newcommand{\NN}{\mathbb{N}}
\newcommand{\PP}{\mathbb{P}}
\newcommand{\CC}{\mathbb{C}}
\newcommand{\cA}{\mathcal{A}}
\newcommand{\cR}{\mathcal{R}}
\newcommand{\cL}{\mathcal{L}}
\newcommand{\ot}{\leftarrow}
\DeclareMathOperator{\im}{im}
\DeclareMathOperator{\codim}{codim}
\DeclareMathOperator{\coker}{coker}
\DeclareMathOperator{\Sym}{Sym}
\DeclareMathOperator{\reg}{reg}
\DeclareMathOperator{\pd}{pd}
\DeclareMathOperator{\depth}{depth}
\DeclareMathOperator{\rk}{rk}
\DeclareMathOperator{\mldeg}{MLdeg}
\DeclareMathOperator{\Hom}{Hom}
\renewcommand{\>}{\right\rangle}
\newcommand{\<}{\left\langle}
\newcommand\set[1]{\left\{\,#1\,\right\}}  
\newcommand\ideal[1]{\left\langle\,#1\,\right\rangle}  
\newcommand\iso{\cong}
\newcommand\Mtwo{\textsc{Macaulay2}\xspace}
\newcommand\scalemath[2]{\scalebox{#1}{\mbox{\ensuremath{\displaystyle #2}}}}
\date{}
\newcommand{\logDer}{D_{\operatorname{log}}} 
\newcommand{\Der}{D} 
\newcommand{\JSyz}{S} 
\title{\textbf{Arrangements and Likelihood}}
\author{Thomas Kahle, Lukas Kühne, Leonie
Mühlherr, \\ Bernd Sturmfels and Maximilian Wiesmann \bigskip \\
\small\emph{Dedicated to the memory of Andreas Dress}}
\begin{document}
\maketitle

\begin{abstract}
\noindent
We develop novel tools for computing
the likelihood correspondence of an arrangement of
hypersurfaces in a projective space.
This uses the module of  logarithmic derivations.
This object is well-studied in the linear case, when the hypersurfaces are
hyperplanes. We  here focus on  nonlinear scenarios
and their applications in statistics and physics.
\end{abstract}

\setcounter{tocdepth}{1}

\section{Introduction}
\label{sec:intro}
This article establishes connections between arrangements of
hypersurfaces \cite{Dup, orlik2013arrangements} and likelihood
geometry \cite{HuhSt}.  Thereby arises a new description, summarized
in Theorem~\ref{thm:evaluation}, of the prime ideal $I(\cA)$ of the
likelihood correspondence of a parametrized statistical model. The
description rests on the Rees algebra of the \emph{likelihood module}
$M(\cA)$ of the arrangement~$\cA$, a module that is closely related to
the module of logarithmic derivations introduced by Saito~\cite{Saito}
for a general hypersurface.  Terao's pioneering work~\cite{Terao} for
hyperplane arrangements is by now the foundation of their algebraic
study.  We prove the following result.
\begin{theorem}\label{thm:twoIdeals}
The quotient $R[s]/I(\mathcal{A})$ is the Rees algebra of the
likelihood module $M(\cA)$.
\end{theorem}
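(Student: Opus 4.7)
The plan is to exhibit a graded $R$-algebra isomorphism $R[s]/I(\cA) \xrightarrow{\sim} \rees(M(\cA))$. Recall that the Rees algebra $\rees(M)$ of a torsion-free module $M$ over a Noetherian domain $R$ can be defined as the image of the symmetric algebra $\Sym_R(M)$ inside $\Sym_R(M\otimes_R K)$, where $K=\mathrm{Frac}(R)$; equivalently, it is $\Sym_R(M)$ modulo its $R$-torsion. This intrinsic description is what I would exploit, as it lets one compare the Rees algebra against any candidate presentation.

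First, I would choose a generating set $m_1,\ldots,m_n$ of the likelihood module $M(\cA)$, realized as logarithmic derivations, and send the Rees indeterminates $s_1,\ldots,s_n$ to these generators. This yields a surjective graded $R$-algebra homomorphism $\varphi\colon R[s]\twoheadrightarrow \rees(M(\cA))$. By Theorem~\ref{thm:evaluation}, the ideal $I(\cA)$ is generated by expressions obtained by evaluating logarithmic derivations on the log-likelihood function. Each such generator visibly lies in $\ker(\varphi)$: the syzygies among the $m_i$ map to zero in $\Sym(M(\cA))$, and the evaluation relations are precisely the relations cutting out $\rees(M(\cA))$ inside $\Sym(M(\cA))$. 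This gives $I(\cA)\subseteq \ker(\varphi)$ and thus an induced surjection $\psi\colon R[s]/I(\cA)\twoheadrightarrow \rees(M(\cA))$.

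The heart of the proof is upgrading $\psi$ to an isomorphism. The strategy is a domain-and-dimension argument. On one side, $R[s]/I(\cA)$ is a domain because $I(\cA)$ is prime by definition, the likelihood correspondence being the irreducible closure of a graph of a rational map. On the other side, $\rees(M(\cA))$ is a domain since it embeds in $\Sym_R(M(\cA)\otimes_R K)$, a polynomial ring over $K$. Both algebras have Krull dimension $\dim R + \rk M(\cA)$: for the Rees side this is standard; for the likelihood side it follows from the generic fiber of the likelihood correspondence over $\mathrm{Spec}\,R$ having dimension equal to the generic rank of $M(\cA)$. A surjection between domains of equal finite Krull dimension is an isomorphism, so $\psi$ is an isomorphism and $\ker(\varphi)=I(\cA)$.

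The main obstacle I anticipate is pinning down the matching of Krull dimensions precisely, which requires identifying the fiber dimension of the likelihood correspondence with the rank of $M(\cA)$. This is essentially an infinitesimal or generic calculation: over a generic point of the arrangement complement, the logarithmic derivations span the full tangent space to the fiber, so the rank of $M(\cA)$ coincides with the dimension of the generic fiber. Once this identification is secured, everything else reduces to the standard universal-property characterization of Rees algebras, and the theorem follows.
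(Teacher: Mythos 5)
Your overall strategy (compare the two algebras via a surjection of affine domains and a dimension count) could in principle work, but the step on which everything hinges is not justified and, as stated, is false. You claim that ``by Theorem~\ref{thm:evaluation}, the ideal $I(\cA)$ is generated by expressions obtained by evaluating logarithmic derivations on the log-likelihood function.'' That theorem says something weaker: the evaluations $\theta(\ell_\cA)$, $\theta\in\Der(\cA)$, span exactly the $(\ast,1)$-graded part of the \emph{pre}-likelihood ideal $I_0(\cA)$, and they generate all of $I(\cA)$ only when $\cA$ is gentle. If your claim held for every arrangement, then $I(\cA)\subseteq I_0(\cA)\subseteq I(\cA)$ would force $I(\cA)=I_0(\cA)$ always, i.e.\ every arrangement would be gentle --- contradicted by Example~\ref{ex:34notgentle} (where $I(\cA)$ needs a generator $q$ quadratic in $s$) and by the octahedron in Example~\ref{ex:octahedron1}. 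So your argument only yields $I_0(\cA)\subseteq\ker(\varphi)$, which is just the definition of the Rees ideal and gives no surjection $R[s]/I(\cA)\twoheadrightarrow \cR(M(\cA))$. The containment $I(\cA)\subseteq I'$ (with $I'$ the Rees ideal) is precisely one of the two nontrivial halves of the theorem; the paper obtains it by exhibiting both $I(\cA)$ and $I'$ as saturations of $I_0(\cA)$ at suitable $s$-free polynomials (Proposition~\ref{prop:localize} for $I'$, the definition of $\cL_\cA$ as a closure for $I(\cA)$) and then playing primeness of each ideal against the fact that no $s$-free polynomial lies in either. There are also smaller slips: $M(\cA)$ has $m$ generators matching $s_1,\dots,s_m$ (not $n$), and its generators are the images of the standard basis of $R^m$, not logarithmic derivations --- the derivations are the \emph{relations}.

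A repairable version of your route exists, but the surjection must go the other way: since $I_0(\cA)\subseteq I(\cA)$ and $R[s]/I(\cA)$ is a domain with $I(\cA)\cap R=0$ (the projection of $\cL_\cA$ to $\PP^{n-1}$ is dominant), the quotient map $\Sym(M(\cA))=R[s]/I_0(\cA)\twoheadrightarrow R[s]/I(\cA)$ kills the $R$-torsion and hence factors through $\cR(M(\cA))$, giving $I'\subseteq I(\cA)$ with no appeal to Theorem~\ref{thm:evaluation}. One can then finish with your dimension argument, after carrying out the fiber-dimension computation you defer (generic fiber of $\cL_\cA$ over $x$-space is the kernel of the $n\times m$ matrix $\bigl[\partial f_j/\partial x_i\,/\,f_j\bigr]$, whose corank equals $\rk M(\cA)$), so that a surjection of affine $\CC$-domains of equal Krull dimension is an isomorphism. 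As written, however, the proposal proves the easy containment with an incorrect citation and leaves the hard one unestablished, so it has a genuine gap; it is also a genuinely different (geometric, dimension-based) route from the paper's purely ideal-theoretic double-saturation argument.
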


In Section~\ref{sec:arrangements}, we start by reviewing Rees algebras
for modules \cite{EHU, SUV} and then prove the theorem.  The nicest
scenario arises when the Rees algebra agrees with the symmetric
algebra.  We call an arrangement $\cA$ \emph{gentle} if the likelihood
module $M(\cA)$ has this property.  In this case, the ideal of the
likelihood correspondence is easy to compute, and the maximum
likelihood (ML) degree is determined by $M(\cA)$.  Being gentle is a
new concept that is neither implied nor implies known properties of a nonlinear
arrangement $\cA$, like being {\em free} or {\em tame}.

The literature on the ML degree \cite{CHKS, HKS} has
focused mostly on implicitly defined models.
We here emphasize the parametric description that is 
more common in statistics, and also seen for scattering equations in
   physics \cite{Lam, ST}.  We 
develop these
connections in Section~\ref{sec:likelihood}.

In Section~\ref{sec:3adjectives} we relate gentleness to the
familiar notions of free and tame arrangements.
Theorem \ref{thm:tlaag} offers a concise statement.
In  Section~\ref{sec:hyperplanes} we turn to the linear case
when the hypersurfaces are hyperplanes.
We study the likelihood correspondence
for graphic arrangements, that is, sub-arrangements of the braid arrangement.
The edge graph of the octahedron yields the
 smallest graphic arrangement which is not gentle; see
Theorem \ref{thm:octahedron}.
 In Section~\ref{sec:software} we present
 software in \Mtwo~\cite{M2} for computing the likelihood correspondence   of $\cA$.

\section{Arrangements and modules}
\label{sec:arrangements}
An \emph{arrangement of hypersurfaces} $\cA$ in projective space
$\PP^{n-1}$ is given by homogeneous polynomials $f_1,f_2,\dotsc, f_m$ in
$R= \CC[x_1,\dotsc, x_n]$.  We work over the complex numbers $\CC$,
with the understanding that the polynomials $f_i$ often have their
coefficients in the rational numbers~$\QQ$. 

 For any complex vector
$s = (s_1,s_2,\dotsc, s_m) \in \CC^m$, we consider the \emph{likelihood
function}
\[
f^s \,=\, f_1^{s_1} f_2^{s_2} \cdots f_m^{s_m}.
\]
This is known as the \emph{master function} in the literature on
arrangements \cite{CDFV}.  Its logarithm
\[ \ell_\cA \,=\, 
s_1 \log(f_1) + 
s_2 \log(f_2) + \cdots + 
s_m \log(f_m) \]
is the \emph{log-likelihood function} or \emph{scattering potential}.
After choosing appropriate branches of the logarithm, the function
$\ell_\cA$ is well-defined on the complement
$\PP^{n-1}\backslash \bigcup_{f_i\in\cA}\{f_i=0\}$. 

For us, it is natural to assume $m > n$. With that hypothesis, the
complement of the arrangement 
is usually a very affine variety, i.e.~it is
isomorphic to a closed subvariety of an algebraic torus (see
e.g.~\cite{Lam}).  When the $f_i$ are linear forms, one recovers the
theory of hyperplane arrangements. This is included in our setup as an
important special~case.

In likelihood inference one wishes to maximize $\ell_\cA$ for given
$s_1,\dotsc, s_m$.  Due to the logarithms, the critical equations
$\nabla \ell_\cA = 0$ are not polynomial equations.
Of course, these rational functions can be made
polynomial by clearing denominators.  But, multiplying through with a high
degree polynomial is  a very bad idea in practice.  A key observation in this
paper is that the various modules of
(log)-derivations that have been considered in the theory of
hyperplane arrangements correctly solve the problem of
clearing denominators.

We now define graded modules over the polynomial ring $R$ which are
associated to~the arrangement $\mathcal{A}$.  To this end, consider
the following matrix with $m$ rows and $m+n$ columns:
\[
Q\,\, = \,\,\begin{bmatrix}
  f_1 & 0 	& \dots & 0 	& \frac{\partial f_1}{\partial x_1} & \dots & \frac{\partial f_1}{\partial x_n} \smallskip \\
  0 	& f_2 	& \dots & 0 	& \frac{\partial f_2}{\partial x_1} & \dots & \frac{\partial f_2}{\partial x_n} \smallskip \\
 \vdots     &  		& \ddots&  		& \vdots & & \vdots \smallskip \\
  0 	& 0 	& \dots & f_m 	& \frac{\partial f_m}{\partial x_1} & \dots & \frac{\partial f_m}{\partial x_n}
\end{bmatrix} \,\,\in\,\, R^{m\times (m+n)}.
\]
Each vector in the kernel ${\rm ker}(Q)$ is naturally partitioned
as
$ \begin{psmallmatrix}
  a\\
  b
\end{psmallmatrix}$,
where $a \in R^m$ and $b \in R^n$. With this partition,  let $
\begin{psmallmatrix}
  A\\
  B
\end{psmallmatrix}
\in R^{(m+n)\times l}$ be a matrix whose columns generate $\ker (Q)$.

\smallskip

We shall distinguish three graded $R$-modules associated with the
arrangement $\mathcal{A}$:
\begin{itemize}
\item The \emph{Jacobian syzygy module} $\JSyz(\cA)$ is $\im (B)$. This is a submodule of $R^n$.
\vspace{-0.1cm}
\item The \emph{log-derivation module} $\logDer(\cA)$ is $\im (A)$. This is a submodule of $R^m$.
\vspace{-0.1cm}
\item The \emph{likelihood module} $M(\cA)$ is $\coker(A)$. This has $m$ generators and $l$ relations.
\end{itemize}
These modules are often identified since they are 
essentially isomorphic. See 
Lemma~\ref{lem:modules}.

\begin{example}[Braid arrangement] \label{ex:braid4}
Let $m=6, n = 4$ and let $\mathcal{A}$ be the graphic arrangement associated with the
complete graph $K_4$ (see Section \ref{sec:hyperplanes}). Writing $x,y,z,w$ for the variables, we have
\newcommand{\myScaleFactor}{1}  
\[
  Q \,\,=\,\,
\scalemath{\myScaleFactor}{\begin{bmatrix}
    x-y & 0 & 0 & 0 & 0 & 0 & 1 & -1 & 0 & 0 \\
    0 & x-z & 0 & 0 & 0 & 0 & 1 & 0 & -1 & 0 \\
    0 & 0 & x-w & 0 & 0 & 0 & 1 & 0 & 0 & -1 \\
    0 & 0 & 0 & y-z & 0 & 0 & 0 & 1 & -1 & 0 \\
    0 & 0 & 0 & 0 & y-w & 0 & 0 & 1 & 0 & -1 \\
    0 & 0 & 0 & 0 & 0 & z-w & 0 & 0 & 1 & -1 \\  
  \end{bmatrix}}. 
\]
The module ${\rm ker}(Q) \subset R^{10}$ is free. It is generated by the $l=4$ rows of the matrix
\[
\begin{bmatrix}
  A\\
  B
\end{bmatrix}^T
 = \, 
\scalemath{\myScaleFactor}{\begin{bmatrix}
  0 & 0 & 0 & 0 & 0 & 0 & -1 & -1 & -1 & -1 \\
  1 & 1 & 1 & 1 & 1 & 1 & -x & -y & -z & -w \\
  x\!+\!y & x\!+\!z & x\!+\!w & y\!+\!z & y\!+\!w & z\!+\!w & -x^2 & -y^2 & -z^2 & -w^2 \\
  x^2{+}xy{+}y^2 &\! \!x^2{+}xz{+}z^2 \! & \cdots   & \cdots & \cdots & \!\! z^2{+}zw{+}w^2 \! & -x^3 & -y^3 & -z^3 & -w^3 \\
\end{bmatrix}}\!. 
\]
The Vandermonde matrix in the last four columns represents the
syzygies on
$\,\nabla f = \bigl[ \partial f/\partial x , \partial f/\partial y ,
\partial f/\partial z , \partial f/\partial w \bigr]$, where $f$ is
the sextic $ (x-y)(x-z)(x-w)(y-z)(y-w)(z-w)$. This is the module
$\JSyz(\cA) \subset R^4$. The module $\logDer(\cA) \subset R^6$ is free of rank $3$ and generated by the
three nonzero rows of $A^T$.
This arrangement $\mathcal{A}$ has all the nice features in Section \ref{sec:3adjectives}.
\end{example}

Let $\Der_\CC(R)$ be the free $R$-module spanned by the partial
derivatives $\partial/\partial x_1,\dotsc, \partial/\partial x_n$.
Fix an arrangement $\cA$ as above and set $f = f_1 f_2 \cdots f_m$.
The {\em module of $\cA$-derivations} is
\begin{equation}\label{eq:Der}
\Der (\cA) \,\,= \,\,\set{\theta \in \Der_\CC (R) : \theta (f) \in \ideal{f}}.
\end{equation}
This definition is extensively used in the case of linear hyperplane
arrangements, but it makes sense for any homogeneous polynomial $f$.
The condition
$\theta(f) \in \ideal{f}$ ensures that the derivation
$\theta$, when applied to the log-likelihood $\ell_\cA$, yields an
honest polynomial rather than a rational function with $f_i$ in the
denominators.  This is expressed in Theorem~\ref{thm:evaluation}
via an injective $R$-module homomorphism
$\Der(\cA) \to R[s_1,\dotsc, s_m]$ which evaluates $\theta$
on~$\ell_{\cA}$.  

Using modules instead of ideals one can
store more refined information, namely how each $\theta \in \Der(\cA)$
acts on the individual factors $f_{i}$ or their logarithms.  While at first it
might seem natural to store elements of $\Der(\cA)$ as coefficient vectors
in~$R^n$, it is more efficient to store their values on
the~$f_i$.  This yields the log-derivation module~$\logDer(\cA)$, a
submodule of~$R^m$.  This representation has been used in computer
algebra systems like \Mtwo, together with the matrix $Q$
from above.  In the likelihood context, it appears in
\cite[Algorithm~18]{HKS}.

\begin{lemma}\label{lem:modules} Let $\cA$ be an arrangement in $\PP^{n-1}$, defined by
coprime polynomials $f_1,\ldots,f_m$.
\begin{enumerate}
\item\label{it:modules1} The Jacobian syzygy module $\JSyz(\cA)$ and the module of $\cA$-derivations $\Der(\cA)$ are both isomorphic to $\ker(Q)$. Moreover, 
writing $\operatorname{Jac}(F)$ for the right hand side of $Q$, 
the Jacobian matrix of~$F = (f_1,\dotsc, f_m)$, 
there is a short exact sequence
  \[
    0 \rightarrow \ker(\operatorname{Jac}(F)) \rightarrow \ker(Q) \rightarrow \logDer(\cA) \rightarrow 0.
  \]
In particular, if $\ker(\operatorname{Jac}(F)) = 0$, then $\logDer(\cA)$ is isomorphic to the modules above.
\item\label{it:modules2} We have $\,\JSyz(\cA) \,\iso \,\JSyz_0(\cA) \oplus R\theta_E$,
where the second direct summand 
is the free rank $1$ module spanned by the \emph{Euler derivation}
\( \theta_E = \sum_{i=1}^n x_i \frac{\partial}{\partial x_i}\), and
\( \JSyz_0(\cA) = \ker (R^n \xrightarrow{\nabla f} R)\).
\item\label{it:modules3} The log-derivation module $\logDer(\cA)$ is isomorphic to the
first syzygy module of the likelihood module.  In particular, we have 
$\pd(M(\cA)) = \pd(\logDer(\cA)) + 1$.
\end{enumerate}
\end{lemma}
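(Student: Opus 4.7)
The plan is to work through the three parts in order, exploiting the block structure of $Q$: the left $m\times m$ block is $\operatorname{diag}(f_1,\dotsc,f_m)$ and the right $m\times n$ block is $\operatorname{Jac}(F)$, so $\binom{a}{b}\in \ker(Q)$ is equivalent to the system $f_i a_i = -\sum_j (\partial f_i/\partial x_j)\, b_j$ for $i=1,\dotsc,m$. For part~\ref{it:modules1}, the projection $\binom{a}{b}\mapsto b$ realises $\ker(Q)\iso \im(B)=\JSyz(\cA)\subset R^n$, because in the domain $R$ the relation $f_i a_i=0$ forces $a_i=0$. To identify $\ker(Q)$ with $\Der(\cA)$, I send $\binom{a}{b}$ to $\theta=\sum_j b_j\,\partial/\partial x_j$; the defining identity reads $\theta(f_i)=-f_i a_i \in \ideal{f_i}$, and the hypothesis that the $f_i$ are coprime together with Leibniz turns the collection of these conditions into the single statement $\theta(f)\in\ideal{f}$. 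Conversely, coprimality forces $\theta(f)\in\ideal{f}$ to split as $\theta(f_i)\in\ideal{f_i}$ for each $i$, after which dividing by the non-zero-divisor $f_i$ yields a unique $a_i\in R$. The stated short exact sequence is then the complementary projection $\binom{a}{b}\mapsto a$, surjective onto $\logDer(\cA)=\im(A)$ with kernel $\{(0,b): \operatorname{Jac}(F)\, b=0\}\iso \ker(\operatorname{Jac}(F))$.

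Part~\ref{it:modules2} follows from Euler's identity $\theta_E(f)=(\deg f)f\in\ideal{f}$, which both places $\theta_E$ in $\Der(\cA)$ and supplies the splitting: any $b\in\JSyz(\cA)$ with $\sum_j b_j \,\partial f/\partial x_j = cf$ can be rewritten as $\tfrac{c}{\deg f}(x_1,\dotsc,x_n)+\bigl(b-\tfrac{c}{\deg f}(x_1,\dotsc,x_n)\bigr)$, where the second summand lies in $\JSyz_0(\cA)$; directness is seen from the fact that any $\alpha\theta_E \in \JSyz_0(\cA)$ would force $\alpha(\deg f) f=0$, hence $\alpha=0$ in the domain $R$. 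For part~\ref{it:modules3}, the defining presentation $R^l\xrightarrow{A} R^m \to M(\cA)\to 0$ exhibits $\logDer(\cA)=\im(A)$ as a first syzygy of the likelihood module under its chosen generators; splicing any minimal free resolution of $\logDer(\cA)$ with the short exact sequence $0\to \logDer(\cA)\to R^m\to M(\cA)\to 0$ and invoking Schanuel's lemma to compare with a minimal free resolution of $M(\cA)$ (which is needed because the presentation matrix $A$ may have unit entries, as in Example~\ref{ex:braid4}) yields $\pd(M(\cA))=\pd(\logDer(\cA))+1$.

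The hardest point is the careful bookkeeping in part~\ref{it:modules1}: the four closely related modules $\ker(Q)$, $\JSyz(\cA)$, $\Der(\cA)$, and $\logDer(\cA)$ must be kept notationally distinct while simultaneously being shown essentially isomorphic, and the translation between the single condition $\theta(f)\in\ideal{f}$ and the system of individual conditions $\theta(f_i)\in\ideal{f_i}$ genuinely depends on the coprimality hypothesis on $f_1,\dotsc,f_m$; once this is in place, parts~\ref{it:modules2} and~\ref{it:modules3} are comparatively routine applications of Euler's identity and the standard homological algebra of syzygies.
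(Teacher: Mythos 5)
Your proposal is correct and follows essentially the same route as the paper: the same block reading of $\ker(Q)$ with the coprimality argument identifying $\Der(\cA)$, $\JSyz(\cA)$ and $\ker(Q)$ and the projection onto the $a$-part giving the short exact sequence, the same Euler-derivation splitting for part~\ref{it:modules2}, and the same shift-of-resolutions argument for part~\ref{it:modules3}. Your explicit use of Schanuel's lemma to handle the possibly non-minimal presentation is only a slightly more careful phrasing of the paper's observation that the two resolutions agree up to a homological shift.
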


\begin{proof}
Since the $f_1,\dotsc, f_m$ are coprime, the condition
$\theta (f) \in \ideal{f}$ for a derivation $\theta$ is equivalent to $\,\theta (f_i) \in \ideal{f_i}$ for all $i=1,\dotsc, m$. 
These $m$ conditions correspond to lying in the individual kernels of the $m$ rows.  Hence $\Der(\cA)$ is isomorphic to $\ker(Q)$.  The module $\JSyz(\cA)$ is isomorphic to the kernel since the coefficients $a_i$ in $\theta(f_i) = a_i f_i$ are unique up to constants.  The surjection in the short exact sequence is computing these coefficients from~$\theta$.

Item \ref{it:modules2} is seen by writing
any element of $ \JSyz(\cA) \simeq \Der(\cA)$ as
$\theta = \theta' + \frac{1}{\deg f} \frac{\theta(f)}{f} \,\theta_E$.  Then
$\theta' = \theta - \frac{1}{\deg f} \frac{\theta(f)}{f} \,\theta_E$ satisfies
$\theta'(f) = 0$. Hence, $\theta' $ corresponds to an element in $ \JSyz_0(\cA)$.

For item \ref{it:modules3} we consider free resolutions over the ring $R$.
Let $A\in R^{m\times l}$ be the matrix whose image equals $\logDer(\cA)$.
A free resolution of $\coker (A)$ uses $A$ as the map $F_0\ot F_1$, i.e.
\[
0 \,\ot \,M(\cA) \,\ot \,R^m \,\xleftarrow{A} \, R^l \,\xleftarrow {A_2} \,F_2\, \ot\, \dotsb
\]
The image of $A$ is a submodule of $R^m$, and its free resolution
looks like this:
\[
0 \,\ot\,  \logDer(\cA)\, \xleftarrow{A} \, R^l \, \xleftarrow{A_2} \, F_2 \,\ot\, F_3\,\ot\, \dotsb
\]
The module $R^l$ sits in homological degree zero in the
resolution of $\im(A) = \logDer(\cA)$, and it sits in homological degree one in the resolution
of $\coker(A) = M(\cA)$.  The two resolutions agree from the map $A$ on to the
right, but the homological degree is shifted by one.
\end{proof}

Having introduced the various modules for an arrangement $\cA$, we now turn our attention
to likelihood geometry. This concerns the critical equations
$\nabla \ell_{\cA} = 0$ of the log-likelihood.  To capture the
situation for all possible data values $s_{i}$, one has the following
definition.
\begin{definition}\label{def:likelihoodCorr}
The {\em likelihood correspondence} $\cL_{\cA}$ is the Zariski closure in $\PP^{n-1}\times \PP^{m-1}$ of
\[
  \left\lbrace (x, s)\in \CC^{n}\times \CC^{m}\,\colon\,
    \frac{\partial \ell_\cA}{\partial x_i}(x,s)=0,\, i=1,\dots,n,\,
    f^s(x) \neq 0,\, F(x)\in X_{\reg} \right\rbrace,
\]
where $X$ is the Zariski-closure of the image of
$F\colon\CC^n\rightarrow\CC^{m},\, x \mapsto (f_1(x),\dots ,
f_m(x))$, and $X_{\reg}$ is its set of nonsingular points.  The
\emph{likelihood ideal} $I(\cA)$ is the vanishing ideal of~$\cL_\cA$.
\end{definition}
The likelihood correspondence is a key player in algebraic statistics
\cite{BCF, HuhSt}.  For example, the ML degree (see
Definition~\ref{def:MLdeg}) can be read off from the multidegree of
this variety.

\begin{lemma}\label{lem:liklihoodIPrime}
The likelihood ideal $I(\cA)$ is prime and $\cL_\cA$ is an irreducible
variety.
\end{lemma}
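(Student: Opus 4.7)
The plan is to exhibit $\cL_\cA$ as the Zariski closure of an irreducible projective bundle over an irreducible base, and then deduce primality from irreducibility. Let $U\subset\PP^{n-1}$ denote the Zariski-open subset on which $f_1\cdots f_m\neq 0$, $F(x)\in X_{\reg}$, and the Jacobian $\operatorname{Jac}(F)(x)$ attains its maximal rank $d=\dim X$. Each of these is a Zariski-open condition, and all three hold simultaneously on a dense open subset of $\PP^{n-1}$: $F^{-1}(X_{\reg})$ is dense open because $F$ is dominant onto $X$, and the maximal-rank locus of $\operatorname{Jac}(F)$ is dense open because $d$ is the generic rank of $dF$. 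Hence $U$ is a nonempty open subvariety of the irreducible variety $\PP^{n-1}$ and is itself irreducible.

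On $U$, the critical equations $\partial\ell_\cA/\partial x_j = 0$ can be rewritten as the linear system in $s$
\[
\sum_{i=1}^m \frac{1}{f_i(x)}\frac{\partial f_i}{\partial x_j}(x)\, s_i \,=\, 0 \qquad (j=1,\dots,n),
\]
with $n\times m$ coefficient matrix $M(x) = \operatorname{Jac}(F)(x)^T \cdot \operatorname{diag}(f_1(x),\dots,f_m(x))^{-1}$. Since the diagonal factor is invertible on $U$, the rank of $M(x)$ equals the rank of $\operatorname{Jac}(F)(x)$, which is constantly equal to $d$ on $U$. Consequently $\ker M(x)\subset\CC^m$ is a linear subspace of constant dimension $m-d$ for every $x\in U$.

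Let $V\subset U\times\PP^{m-1}$ be the incidence variety cut out by $M(x)s=0$, together with its projection to $U$. By the previous step every fiber of this projection is isomorphic to $\PP^{m-d-1}$. A surjective morphism onto an irreducible base with irreducible equidimensional fibers has irreducible total space, so $V$ is irreducible. Furthermore, $V$ is obtained from the constructible set in the braces in Definition~\ref{def:likelihoodCorr} by adding the condition that $\operatorname{Jac}(F)(x)$ has maximal rank, which removes only a proper closed subset; hence $V$ is dense in that locus, and $\cL_\cA = \overline{V}$ in $\PP^{n-1}\times\PP^{m-1}$. The Zariski closure of an irreducible set is irreducible, so $\cL_\cA$ is an irreducible variety and its vanishing ideal $I(\cA)$ is prime.

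The main technical point is the constancy of $\operatorname{rank}\operatorname{Jac}(F)$ on $U$, which is arranged by building the maximal-rank condition into $U$ and invoking the classical identification of the generic rank of $dF$ with $\dim X$. The remaining ingredients---openness of the three conditions defining $U$, and irreducibility of the total space of a morphism with irreducible equidimensional fibers over an irreducible base---are routine.
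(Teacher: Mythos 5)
Your argument is essentially the paper's: over the open locus in $x$ where the critical equations, viewed as a linear system in $s$, attain their maximal rank, the correspondence is a bundle of linear spaces over an irreducible base, hence irreducible, and $\cL_\cA$ is its closure. In fact you are slightly more careful than the paper on one point: the paper takes the fibers to be vector spaces of dimension $m-n$ (implicitly assuming the generic rank of the Jacobian is $n$), whereas you work with the generic rank $d$ of $\operatorname{Jac}(F)$, which also covers non-finite-to-one parametrizations. (Minor aside: the fibration criterion you invoke needs properness of $V\to U$, which holds here because $V$ is closed in $U\times\PP^{m-1}$.)

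The one step you pass over too quickly is the identification $\cL_\cA=\overline{V}$. Your justification --- that imposing the maximal-rank condition ``removes only a proper closed subset; hence $V$ is dense in that locus'' --- is not a valid inference by itself: density of the complement of a closed subset presupposes irreducibility of the ambient locus (or at least that no component of the set in Definition~\ref{def:likelihoodCorr} sits over the rank-degenerate points), which is precisely what is being proved. A priori, over points $x$ with $f(x)\neq 0$, $F(x)\in X_{\reg}$, but $\operatorname{rank}\operatorname{Jac}(F)(x)<d$, the defining locus has strictly larger linear fibers, and these could contribute components not contained in $\overline{V}$. The paper makes exactly the same identification without proof (see the remark following Proposition~\ref{prop:localize}, which asserts that replacing $F(x)\in X_{\reg}$ by the maximal-rank condition does not change the closure), so your write-up matches the paper's level of rigor; just be aware that this closure statement is the genuinely delicate point of the lemma, not the bundle argument itself.
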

\begin{proof}
For each fixed vector $x \in \CC^{n}$, the likelihood
equations are linear in the $s$-variables.  The locus where
this linear system has the maximal rank is Zariski-open and dense in~$\CC^n$.
By our assumption $m>n$, the variety $\cL_\cA$ is therefore a vector bundle of rank $m-n$.
In particular, $\cL_\cA$ is irreducible, and its radical ideal 
$I(\cA)$ is prime.
\end{proof}

The second ingredient of Theorem~\ref{thm:twoIdeals} is
the Rees algebra of the likelihood module.
To define this object, we follow~\cite{SUV}.
Let $M$ be an $R$-module with $m$ generators.
The \emph{symmetric algebra of $M$} is a commutative $R$-algebra with
$m$ generators that satisfy the same relations as the
generators of~$M$.  More precisely, if $M = \coker (A)$ 
for some matrix $A\in R^{m\times l}$, then
\begin{equation}
\label{eq:symM}
\Sym (M) \,\,= \,\,R[s_1,\dotsc, s_m] \,/ \< \,(s_1,\dotsc, s_m) \, A \,\>.
\end{equation}
The \emph{Rees algebra $\cR(M)$ of~$M$} is the quotient of the symmetric
algebra $\Sym(M)$ by its $R$-torsion submodule.  Since $R$ is a
domain, its ring of fractions is a field and the likelihood module has
a rank.  This is the setup in~\cite{SUV} and $\cR(M)$ is a domain.
This can be shown, as in the case of ideals, by proving that its
minimal primes arise from minimal primes of~$R$.

\begin{definition}\label{def:ideals}
Let $\cA$ be an arrangement  and $M(\cA) = \coker (A)$ its likelihood module. 
The \emph{pre-likelihood ideal} of~$\cA$ is
$$
I_0(\cA) = \<\,(s_1,\dotsc, s_m)\,A\,\>\,\,\subset\,\, R[s] = \CC[x_1,\dotsc, x_n, s_1,\dotsc, s_m].$$
  This is the ideal shown in (\ref{eq:symM}),
where it presents the symmetric algebra of $M(\cA)$.

We further define $I$ as the kernel of the composition
\begin{equation}
\label{eq:composition}
 R[s] \, \to\, \Sym(M(\cA))\, \to\, \cR (M(\cA)). 
 \end{equation}
Thus, $I$ is an ideal in the ring on the left. It contains the pre-likelihood ideal  $I_0(\cA)$.
We refer to $I$ as the \emph{Rees ideal} of
the module~$M(\cA)$ because it
presents the Rees algebra of $M(\cA)$. \end{definition}

Theorem~\ref{thm:twoIdeals} states that the Rees ideal of $M(\cA)$ equals the likelihood ideal,
i.e.~$I  =  I(\cA)$. This will be proved below.
The ambient polynomial ring
$R[s] = \CC[x_1,\dotsc, x_n, s_1,\dotsc, s_m]$ is
 bigraded via $\deg(x_i) =
\begin{psmallmatrix}
  1\\
  0
\end{psmallmatrix}
$ for $i=1,\dotsc, n$ and $\deg(s_i) =
\begin{psmallmatrix}
  0\\1
\end{psmallmatrix}
$ for $i=1,\dotsc, m$.
The Rees ideal can be computed with general methods in \Mtwo.
See~\cite{ETPS} for a computational introduction.  The output of the
general methods will differ from ours as these tools usually work with
minimal presentations of modules, thereby reducing the number of
variables~$s_{i}$.  For us it makes sense to preserve symmetry and
also accept non-minimal presentations.

A module whose symmetric algebra agrees with the Rees algebra is
\emph{of linear type}.  This is the nicest case,
where the symmetric algebra has no $R$-torsion,
so it equals the Rees algebra.

\begin{definition}\label{def:gentle}
An arrangement $\cA$ is \emph{gentle} if its likelihood module is of
linear type, that is, if its likelihood ideal $I(\cA)$ equals
the pre-likelihood ideal $I_0(\cA)$. This happens if and only if
the map on the right in (\ref{eq:composition}) is an isomorphism, in which case 
$\Sym(M(\cA)) = \cR(M(\cA))$.
\end{definition}

\begin{example} The graphic arrangement of $K_4$ is gentle.
Fix the $ 6 \times 4$ matrix $A$ in Example~\ref{ex:braid4}.
The pre-likelihood ideal has three generators, one for each nonzero column of~$A$:
 \begin{equation}
\label{eq:likelihoodideal1} I_0(\mathcal{A}) \,\,=\,\,
\bigl\langle \,[ s_{12},s_{13},s_{14},s_{23},s_{24},s_{34} ] \cdot A \,\bigr\rangle \,\subset \, 
 R[s_{12},s_{13},s_{14},s_{23},s_{24},s_{34}].
 \end{equation}
One generator is $\sum_{ij} s_{ij}$. The other two generators
have bidegrees $\begin{psmallmatrix}
  1\\
  1
\end{psmallmatrix}$ and $
\begin{psmallmatrix}
  2\\
  1
\end{psmallmatrix}
$. Using \Mtwo, we find that the pre-likelihood ideal  $I_0 (\cA)$ is
prime. Hence, by Proposition~\ref{prop:localize} below, $I_0(\cA)$ equals the Rees ideal of $M(\cA)$, 
which is the likelihood ideal
$I(\cA)$. It defines a complete intersection in $\PP^3 \times \PP^5$.
This variety is the likelihood correspondence $\cL_{\cA}$.
\end{example}

\begin{example}[$n=3,m=4$] \label{ex:34notgentle}
The arrangement $\cA = \{ x, y, z, x^3+y^3+xyz \}$ is not gentle.
It consists of the three coordinate
lines and one cubic in $\PP^2$.
Its pre-likelihood ideal equals
\[ \begin{matrix} I_0(\cA) \, = \, \bigl\langle\,
 s_1+s_2+s_3+3 s_4,\,
      x z   \cdot s_2 - (3 y^2+x z) \cdot  s_3, \,
      y z  \cdot s_2 + (3 x^2+2 y z)  \cdot s_3 + 3 y z  \cdot s_4,  \\ \qquad \qquad
      (x^3+y^3)   \cdot s_2 \,+\, (3 y^3+x y z) \cdot s_3 \,+\, (3 y^3+x y z) \cdot  s_4\, \bigr\rangle.
\end{matrix}      
\]
This ideal is radical but it is not prime. Its prime decomposition equals
\[ \begin{matrix}
& I_0(\cA) & = & \!\! \bigl( I_0(\cA) + \langle x,y \rangle \bigr) \,\cap \, I(\cA), \qquad
{\rm where} \quad
I(\cA) \,=\, 
I_0(\cA) + \langle \,q\, \rangle  \smallskip \\
{\rm and} &
 q \! & = & z^2  \cdot s_2^2 
\,+\, z^2  \cdot  s_2 s_3 \,+\, 9 x y \cdot  s_3^2 \,-\, 2 z^2 \cdot  s_3^2 
\,+\, 3 z^2\cdot   s_2 s_4\, -\, 3 z^2  \cdot s_3 s_4.
\end{matrix}
\]
The extra generator $q$ of the likelihood ideal
is quadratic in the data vector $s = (s_1,s_2,s_3,s_4)$.
\end{example}

For hyperplane arrangements, our ideals were introduced
by Cohen et al.~\cite{CDFV} who called them the
\emph{logarithmic ideal} and the \emph{meromorphic ideal},
respectively.
In spirit of Terao's freeness conjecture, one can ask
whether gentleness is combinatorial, i.e.~can the underlying
matroid decide whether an arrangement is gentle?  
One candidate is the pair of non-isomorphic likelihood ideals
in \cite[Example 5.7]{DGS}. But this does not answer
our question, since all line arrangements in
$\PP^{2}$ are gentle (Theorem~\ref{thm:tlaag}).
A counterexample
must have rank at least~$4$.

Our technique for computing likelihood ideals of
arrangements rests on the following result. It transforms the
pre-likelihood ideal $I_0$ into the Rees ideal~$I$ via saturation.
\begin{proposition}\label{prop:localize}
Let $p$ be an element in $R$ such that
$M(\cA)[p^{-1}]$ is a free $R[p^{-1}]$-module.
 Then the likelihood ideal of the arrangement $\cA$ 
 is the saturation $\,I(\cA) = (I_0(\cA):p^\infty)$.
In particular, the arrangement $\cA$ is gentle if and only if its
pre-likelihood ideal $I_0(\cA)$ is prime.
\end{proposition}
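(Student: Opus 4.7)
The plan is to use Theorem~\ref{thm:twoIdeals} together with Definition~\ref{def:ideals} to identify the quotient $I(\cA)/I_0(\cA)$ with the $R$-torsion submodule of $\Sym(M(\cA))$, and then to exploit the localization hypothesis to show that this torsion is exactly the $p$-power torsion, yielding the saturation formula.

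Concretely, Theorem~\ref{thm:twoIdeals} and Definition~\ref{def:ideals} give $R[s]/I(\cA) = \cR(M(\cA))$ and $R[s]/I_0(\cA) = \Sym(M(\cA))$. Since the Rees algebra is by construction $\Sym(M(\cA))$ modulo its $R$-torsion submodule, $I(\cA)/I_0(\cA) \subset \Sym(M(\cA))$ is precisely the $R$-torsion. The claim $I(\cA) = (I_0(\cA):p^\infty)$ therefore reduces to showing that $y \in \Sym(M(\cA))$ is $R$-torsion if and only if it is $p$-power torsion. Only one direction requires work. I would invoke that symmetric algebras commute with localization, so
\[
\Sym(M(\cA))[p^{-1}] \,\cong\, \Sym_{R[p^{-1}]}(M(\cA)[p^{-1}]).
\]
By hypothesis $M(\cA)[p^{-1}]$ is free over the domain $R[p^{-1}]$, so the right hand side is a polynomial ring over a domain, and is itself a domain. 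If $0 \neq r \in R$ annihilates $y$, then its nonzero image in this domain annihilates the image of $y$, which forces $y/1 = 0$ in $\Sym(M(\cA))[p^{-1}]$, i.e., $p^k y = 0$ in $\Sym(M(\cA))$ for some~$k \geq 0$.

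For the ``in particular'' statement, gentleness is the equality $I(\cA) = I_0(\cA)$; if it holds, Lemma~\ref{lem:liklihoodIPrime} immediately yields primality of $I_0(\cA)$. Conversely, suppose $I_0(\cA)$ is prime. Since the degree-zero subring $R$ embeds into the domain $R[s]/I(\cA) = \cR(M(\cA))$, we have $R \cap I(\cA) = 0$, and thus $R \cap I_0(\cA) = 0$, so in particular $p \notin I_0(\cA)$. Primality of $I_0(\cA)$ then gives $(I_0(\cA):p^\infty) = I_0(\cA)$, which combined with the saturation formula just proved delivers $I(\cA) = I_0(\cA)$. I do not anticipate any substantial obstacle; the only mildly subtle ingredients are the standard facts that $\Sym$ commutes with localization and that the symmetric algebra of a free module over a domain is a domain.
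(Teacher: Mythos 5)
Your argument is sound in substance, and its core is the same localization idea as the paper's: where the paper simply cites that the Rees algebra construction commutes with localization \cite{ETPS}, you carry the argument out at the level of the symmetric algebra ($\Sym$ commutes with localization, $\Sym_{R[p^{-1}]}(M(\cA)[p^{-1}])$ is a polynomial ring over a domain, hence the $R$-torsion of $\Sym(M(\cA))$ is exactly the $p$-power torsion). You also make explicit the step the paper leaves implicit in the ``in particular'' part, namely that $p\notin I_0(\cA)$ so that saturating a prime $I_0(\cA)$ changes nothing; note that $R\cap I_0(\cA)=0$ already follows from the fact that $I_0(\cA)$ is generated by elements of $s$-degree one, so no appeal to Theorem~\ref{thm:twoIdeals} is needed for that point.

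The one issue you must address is the opening appeal to Theorem~\ref{thm:twoIdeals}. In the paper, Proposition~\ref{prop:localize} comes first and the proof of Theorem~\ref{thm:twoIdeals} invokes it (in the form ``the Rees ideal equals $(I_0(\cA):p^\infty)$''), so quoting Theorem~\ref{thm:twoIdeals} inside a proof of Proposition~\ref{prop:localize} is circular relative to the paper's logical architecture. The circle is easily broken, because the part of your argument that does the real work---torsion of $\Sym(M(\cA))$ equals $p$-power torsion, equivalently the Rees ideal $I$ of Definition~\ref{def:ideals} equals $(I_0(\cA):p^\infty)$---does not use Theorem~\ref{thm:twoIdeals} at all. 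Presented in the order (i) Rees ideal $=$ saturation (your torsion computation), (ii) Theorem~\ref{thm:twoIdeals}, whose proof uses (i) together with the geometric input of Lemma~\ref{lem:liklihoodIPrime}, and (iii) the proposition as stated, your proof is correct and in fact supplies details that the paper delegates to a citation; as written, however, step (iii) is doing double duty as an ingredient of (ii), which you should flag and reorganize.
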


\begin{proof}
The proof of the statement about $p$ uses the fact that the Rees algebra
construction commutes with localization. This can be found in
\cite[Section~2]{ETPS}.  The likelihood ideal $I(\cA)$ is always prime,
since the Rees algebra is a domain whenever $R$ is. Thus, if $I_0(\cA)$ is
not prime, then it is not the likelihood ideal and the arrangement
$\cA$ is not gentle.  If $I_0(\cA)$ is prime, then picking any suitable $p$
in the first part shows that it is the likelihood ideal.
\end{proof}

\begin{remark}
The existence of an element $p$ as in Proposition~\ref{prop:localize}
is guaranteed by generic freeness.  In our case, we can take $p$ as
the product of the $f_i$ and all maximal nonzero minors of the
Jacobian matrix of $F=(f_1,\dotsc, f_m)$. This follows from the
construction of the likelihood correspondence.  There
$F(x) \in X_{\reg}$ is required, but the proof of
Lemma~\ref{lem:liklihoodIPrime} requires only that the Jacobian of $F$
has maximal rank. We can replace $F(x) \in X_{\reg}$ by
this latter condition without changing the closure.
Computing the saturation tends to be a horrible computation.  For
practical purposes, it usually suffices to saturate $I_0$ at just a
few of these polynomials and checking primality after each step.  In
Example \ref{ex:34notgentle}, we can take $p$ to be any element in the
ideal $\langle x,y\rangle$ for the singular locus of
the cubic $x^3+y^3+xyz$.
\end{remark}

\begin{proof}[Proof of Theorem~\ref{thm:twoIdeals}]
Let $I$ be the prime likelihood ideal and $I_0$ the pre-likelihood
ideal of an arrangement~$\cA$.  Since the generators of $I_0$
vanish on the likelihood correspondence
$\cL_\cA$, we have $I_0 \subseteq I$.  Let $I'$ be the Rees ideal of
the likelihood module $M(\cA)$.  Clearly, also $I_0 \subseteq I'$ and
$I'$ is prime.  Let $p$ be an element as in
Proposition~\ref{prop:localize}, then
$I' = I_0 : p \subseteq I \colon p$.  Since $p \in R$ does not contain
any $s$ variables, $p\notin I$.  Hence, $I\colon p = I$ and thus
$I'\subseteq I$.  Conversely, also $I = I_0:f$ where $f$ equals a
sufficiently high power of the product of the polynomials cutting out
the singular locus of $X$ and the forms~$f_i$, another polynomial that
is $s$-free and no such polynomial vanishes on~$\cL_\cA$.  Hence, also
$I = I_0 : f \subseteq I' : f = I'$ and thus $I = I'$.
\end{proof}

We conclude this section with an emblematic result linking
 arrangements and likelihood.

\begin{theorem} \label{thm:evaluation}
  The evaluation of $\cA$-derivations at the log-likelihood function 
  \begin{equation*}
    \Der(\cA) \rightarrow I(\cA)_{(\ast,1)} \subset R[s],\quad 
    \theta \mapsto \theta(\ell_{\cA})
  \end{equation*}
  is an injective $R$-linear map onto $I_0(\cA)_{(\ast,1)}$. It is an isomorphism if and only if $\cA$ is gentle.
\end{theorem}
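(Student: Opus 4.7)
The plan is to unpack the formula
\[
\theta(\ell_\cA) \;=\; \sum_{i=1}^m s_i\,\frac{\theta(f_i)}{f_i}
\]
and keep track of where the result takes values. First I would verify well-definedness: by the proof of Lemma~\ref{lem:modules} (using coprimality of the $f_i$), $\theta \in \Der(\cA)$ is equivalent to $\theta(f_i) = a_i f_i$ for uniquely determined $a_i \in R$. Hence $\theta(\ell_\cA) = \sum_i s_i a_i$ is an honest polynomial of bidegree $(\ast,1)$.

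Next I would identify the image with $I_0(\cA)_{(\ast,1)}$. The tuple $a = (a_1,\dots,a_m)$ is exactly the image of $\theta$ under the map $\Der(\cA)\simeq \ker(Q)\to \logDer(\cA)=\im(A)$ of Lemma~\ref{lem:modules}(1). Writing $a = Av$ for some $v\in R^l$ yields $\theta(\ell_\cA) = (sA)\,v$, an $R$-linear combination of the $l$ entries of the row vector $sA$. Since these entries are $s$-linear generators of $I_0(\cA)$, their $R$-span is exactly the bigraded piece $I_0(\cA)_{(\ast,1)}$. Containment in $I(\cA)$ then follows by evaluating at a point $(x,s)$ of the dense open subset of $\cL_\cA$ from Definition~\ref{def:likelihoodCorr}: there $\partial \ell_\cA/\partial x_j(x,s)=0$ for all $j$, so $\theta(\ell_\cA)(x,s)=\sum_j b_j(x)\,\partial_{x_j}\ell_\cA(x,s)=0$, and Zariski closure gives $\theta(\ell_\cA)\in I(\cA)$.

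For injectivity, the evaluation factors as $\Der(\cA) \to \logDer(\cA) \hookrightarrow R[s]_{(\ast,1)}$. The second factor $a\mapsto \sum_i s_i a_i$ is injective because the $s_i$ are indeterminates, and the first factor has kernel $\ker(\operatorname{Jac}(F))$ by Lemma~\ref{lem:modules}(1); in the paper's standing setup this vanishes, since the existence of points with $F(x)\in X_{\reg}$ forces $\operatorname{Jac}(F)$ to have generic rank $n$, and submodules of $R^n$ are torsion-free.

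Finally, the biconditional: if $\cA$ is gentle then $I(\cA)=I_0(\cA)$ by Definition~\ref{def:gentle}, whence $I(\cA)_{(\ast,1)}=I_0(\cA)_{(\ast,1)}$ coincides with the image and the map is an isomorphism. The converse is the step I expect to be the main obstacle: one must lift equality in bidegree $(\ast,1)$ to equality of ideals. My plan here is to use the saturation characterization $I(\cA) = I_0(\cA):p^\infty$ from Proposition~\ref{prop:localize} together with the Rees algebra structure, analysing how $R$-torsion in the symmetric algebra $R[s]/I_0(\cA)$ propagates between $s$-degrees, so that its absence in $s$-degree one (the assumption that the map is surjective onto $I(\cA)_{(\ast,1)}$) forces $\Sym(M(\cA))=\cR(M(\cA))$.
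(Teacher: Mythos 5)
Your treatment of the first assertion follows essentially the same route as the paper: factor the evaluation through $\Der(\cA)\to\logDer(\cA)$ using Lemma~\ref{lem:modules}, observe that $\theta(\ell_\cA)=\sum_i s_i a_i$ is linear in $s$, identify the $R$-span of the entries of $sA$ with $I_0(\cA)_{(\ast,1)}$, and get containment in $I(\cA)$ by vanishing on the dense subset of $\cL_\cA$ from Definition~\ref{def:likelihoodCorr}; this part is fine and in fact more explicit than the paper's proof. One caveat on injectivity: your claim that ``the existence of points with $F(x)\in X_{\reg}$ forces $\operatorname{Jac}(F)$ to have generic rank $n$'' is not correct reasoning --- smooth points of $X$ always exist, and if the parametrization $F$ has positive-dimensional generic fibers then $\ker(\operatorname{Jac}(F))\neq 0$ and the evaluation map really does have a kernel. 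What is needed is that $F$ is generically finite onto its image; the paper assumes this implicitly (it is what makes $\Der(\cA)\cong\logDer(\cA)$ in Lemma~\ref{lem:modules} and underlies the proof of Lemma~\ref{lem:liklihoodIPrime}), so this is a shared, minor imprecision rather than a divergence.

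The genuine gap is the converse of the final equivalence, which you explicitly leave as a plan rather than a proof. Moreover, the route you propose --- showing that absence of $R$-torsion of $\Sym(M(\cA))$ in $s$-degree one propagates to all $s$-degrees, via the saturation $I(\cA)=(I_0(\cA):p^\infty)$ of Proposition~\ref{prop:localize} --- cannot succeed: torsion can sit entirely in higher $s$-degrees. The paper's own Example~\ref{ex:34notgentle} is the decisive test case: there $I(\cA)=I_0(\cA)+\langle q\rangle$ with $q$ homogeneous of $s$-degree two, so both ideals agree in bidegree $(\ast,1)$ and the evaluation map is onto $I(\cA)_{(\ast,1)}$, yet the arrangement is not gentle. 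Hence surjectivity in $s$-degree one does not force linear type, and any argument of the kind you sketch must fail. For comparison, note that the paper's own proof of Theorem~\ref{thm:evaluation} only establishes the ``if'' direction (gentle implies $I_0(\cA)_{(\ast,1)}=I(\cA)_{(\ast,1)}$) and offers no argument for the ``only if'' direction, which in light of Example~\ref{ex:34notgentle} is itself delicate as stated. So: your proposal correctly reproduces everything the paper actually proves, but it does not prove the claimed equivalence, and the outlined strategy for the missing direction would not work.
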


\begin{proof}
Any derivation $\theta$ maps $\ell_\cA$ to a rational function
in $\CC[s](x)$. The image is a polynomial in $\CC[x,s]$ if and only if $\theta \in \Der(\cA)$. Any polynomial in the image is linear in the $s$ variables, so the image lies in the $(\ast,1)$-graded part of $I(\cA)$.
The isomorphism between $\Der(\cA)$ and $\logDer(\cA)$
 in Lemma \ref{lem:modules}
ensures that the map is injective, and that
these polynomials generate the $(\ast,1)$-graded part of the ideal $I_0(\cA)$. If $\cA$ is gentle, then $I_0(\cA)_{(\ast,1)} = I(\cA)_{(\ast,1)}$.
\end{proof}

\section{Likelihood in statistics and physics}
\label{sec:likelihood}

Our study of hypersurface arrangements offers new tools for statistics
and physics.  We explain this point now.
This happens in the general context of applied algebraic geometry
which is a rapidly growing field in the mathematical sciences.  In
applications, nonlinear models are ubiquitous, so it is not sufficient
to consider only arrangements of hyperplanes.

We start out with  basics on likelihood inference in
algebraic statistics~\cite{ABB,BCF,CHKS,HKS,HuhSt}.
Let $\cA$ be an arrangement in $\PP^{n-1}$,
given by homogeneous polynomials
$f_1,\ldots,f_m \in \RR[x_1,\ldots,x_n]$  of the same degree.
The  unknowns $x_1,\ldots,x_n$
are model parameters and the polynomials
$f_1,\ldots,f_m$ represent probabilities.  Let $X$ denote the
Zariski closure of the image of the map 
\[
  F\colon\CC^n\dashrightarrow\PP^{m-1},\, x\mapsto \bigl(f_1(x):f_2(x):\dots : f_m(x)\bigr).
\]
The algebraic variety $X$ represents a statistical model for
discrete random variables. Our model has $m$ states. The parameter region
consists of the points in $\RR^n$ where all $f_i$ are positive.
On that region, the rational function $\,f_i \,/ \sum_{j=1}^m f_j\,$ is the probability of observing the
$i$th state. In other words, the statistical model is given by the intersection of $X$ with the probability simplex $\Delta$ in $\PP^{m-1}$. Here, the $f_i$ are rarely linear, and the $s_i$ are
nonnegative integers which summarize the data.  Namely, $s_i$ is the
number of samples that are in state~$i$.

In statistics, one maximizes
the log-likelihood function $\ell_\cA$ over all points $x$ of the parameter region.
Here, the $s_{i}$ are given numbers and one considers the
critical equations $\nabla \ell_\cA = 0$. This is a system of rational function equations.
Any algebraic approach transforms these into polynomial equations.
Na\"ive clearing of denominators does not work because it introduces
too many spurious solutions.
The key challenge is to clear denominators
in a manner that is both efficient and mathematically sound.
That challenge is precisely the point of this paper.\par   

A key notion in likelihood geometry is the maximum likelihood degree, counting critical points of the likelihood function. We introduce a notion of this in our parametric arrangement setup. The likelihood correspondence $\cL_{\cA}$ lives in a product of projective spaces $\PP^{n-1} \times \PP^{m-1}$. Its class in the cohomology ring $H^*(\PP^{n-1}\times \PP^{m-1}; \ZZ) \cong \ZZ[p,u]/\langle p^n, u^m \rangle$ is a binary  form 
\begin{equation}
\label{eq:bidegree}
    \left[ \cL_{\cA} \right] \,\,=\,\, c_d p^d + c_{d-1} p^{d-1}u +  c_{d-2} p^{d-2} u^2 + \,\cdots\, +
    c_1 p u^{d-1} +  c_0 u^d,
\end{equation}
where $d = \mathrm{codim}(\cL_{\cA})$. This agrees with the \emph{multidegree} of $I(\cA)$ as in 
\cite[Part~II, \S 8.5]{MS}.

\begin{definition}
  \label{def:MLdeg}
  The \emph{maximum likelihood (ML) degree} $\mldeg(\cA)$ of the arrangement $\cA$ is the leading coefficient of $\left[ \cL_{\cA} \right]$, i.e.,~it equals $c_i$
   where $i$ is the largest index such that $c_i > 0$.
\end{definition}

If $ c_d > 0$ then $\mldeg(\cA) = c_d$ and
 Definition \ref{def:MLdeg} gives a critical point count.

\begin{proposition}
  \label{prop:MLDegFromMultiDeg}
If $\,\mldeg(\cA) = c_d$ then the set
\begin{equation}
    \label{eq:criticalPointsLikelihood}
    \left\{ x\in \PP^{n-1} \,\colon\, \nabla\ell_{\cA}(x,s)=0,\,
    f^s(x) \neq 0,\, F(x)\in X_{\reg} \right\},
\end{equation} 
is finite for generic choices of $s$. Its cardinality equals $\mldeg(\cA)$ and does not depend on $s$.
\end{proposition}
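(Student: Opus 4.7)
The plan is to identify the set in (\ref{eq:criticalPointsLikelihood}) with the fiber of the second projection $\pi_2\colon\cL_\cA\to\PP^{m-1}$ over $s$, and then count points using intersection theory in $H^*(\PP^{n-1}\times\PP^{m-1};\ZZ)=\ZZ[p,u]/\ideal{p^n,u^m}$. First, I would establish the identification for generic $s$. Since $\cL_\cA$ is irreducible by Lemma~\ref{lem:liklihoodIPrime} and defined as a Zariski closure, the closed sublocus $B\subset\cL_\cA$ on which one of the open conditions $f^s(x)\neq 0$ or $F(x)\in X_\reg$ fails is a proper subvariety, so $\overline{\pi_2(B)}\subsetneq\PP^{m-1}$. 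For $s$ outside this proper closed subvariety, the fiber $\pi_2^{-1}(s)\cap\cL_\cA$ lies entirely in the complement of $B$, and thus coincides with the set in (\ref{eq:criticalPointsLikelihood}).

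Next I would intersect $[\cL_\cA]$ with the class of a fiber $\PP^{n-1}\times\{s\}$, which is $u^{m-1}$. Using (\ref{eq:bidegree}),
\[
  [\cL_\cA]\cdot u^{m-1} \,\,=\,\, \sum_{i=0}^d c_i\, p^i u^{m-1+d-i} \,\,\equiv\,\, c_d\, p^d u^{m-1} \pmod{u^m},
\]
since every summand with $i<d$ has $u$-exponent at least $m$. The hypothesis $\mldeg(\cA)=c_d$ is precisely $c_d>0$, so this product is nonzero in the quotient ring. Nonvanishing forces $d\le n-1$, and the resulting intersection cycle represents the scheme-theoretic fiber $\pi_2^{-1}(s)$ with class $c_d\, p^d$ in $H^*(\PP^{n-1})$. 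Finiteness of the generic fiber is tantamount to $d=n-1$, in which case $p^d=p^{n-1}$ is the class of a point and the fiber has length exactly $c_d$. To upgrade length to a count of distinct reduced points, I would apply generic smoothness in characteristic zero: the morphism $\pi_2$ is generically étale on the dense open where its fibers are zero-dimensional, so its generic fiber is a reduced set of $c_d=\mldeg(\cA)$ points, with cardinality constant in $s$ on a Zariski-open subset.

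The main obstacle I anticipate is the first step: reconciling the Zariski closure in the definition of $\cL_\cA$ with the open conditions $f^s\neq 0$ and $F(x)\in X_\reg$, so that the set in (\ref{eq:criticalPointsLikelihood}) genuinely equals the set-theoretic fiber for generic $s$. One must show both that the closure does not add stray fiber points (controlled by irreducibility of $\cL_\cA$) and that the open conditions do not exclude any genuine critical points of the fiber. Once this identification and the codimension $d=n-1$ are in hand, the cohomological extraction of the count $c_d$ and the genericity argument for reducedness are standard.
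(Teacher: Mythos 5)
Your overall route is the same as the paper's: identify the set \eqref{eq:criticalPointsLikelihood} with a general fiber of the projection $\pi_2\colon \cL_\cA \to \PP^{m-1}$ and read off its cardinality from the coefficient $c_d$ of $[\cL_\cA]$, with generic smoothness supplying reducedness. Those parts are fine once finiteness of the general fiber is known. The genuine gap is that you never establish that finiteness, which is the first assertion of the proposition: you deduce $d\le n-1$ from $c_d>0$, remark that finiteness of the generic fiber is ``tantamount to $d=n-1$,'' and then continue ``in which case\,\dots'' without ever proving $d=n-1$ (your closing paragraph likewise treats ``the codimension $d=n-1$'' as an input rather than something to be shown). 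This is not a formal consequence of $c_d>0$ for an irreducible subvariety of $\PP^{n-1}\times\PP^{m-1}$: for instance $V=C\times\PP^{m-1}$ with $C\subset\PP^{2}$ a plane curve (so $n=3$) has class $\deg(C)\,p$, hence $d=1<n-1$ and $c_d=\deg(C)>0$, yet every fiber of $\pi_2$ is the curve $C$. Such a $V$ is of course not a likelihood correspondence, but your argument uses nothing about $\cL_\cA$ beyond irreducibility and its class, so it cannot rule this behavior out. This missing step is exactly what the paper's proof compresses into the assertion that, under $c_d>0$, the projection $\cL_\cA\to\PP^{m-1}$ is finite-to-one; a complete argument has to bring in the structure of $\cL_\cA$ (e.g.\ the fibration over a dense open subset of $\PP^{n-1}$ by linear spaces from Lemma~\ref{lem:liklihoodIPrime}, which ties $\codim \cL_\cA$ to the fiber dimension of the first projection), not just the multidegree.

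Two further points in the same vein. Your statement that the product $[\cL_\cA]\cdot u^{m-1}$ ``represents the scheme-theoretic fiber $\pi_2^{-1}(s)$'' is only valid when the intersection with $\PP^{n-1}\times\{s\}$ is proper, i.e.\ precisely when the fiber is finite; in the excess-dimensional case the class is still $c_d\,p^d u^{m-1}$ but is not the class of the fiber, so as written this step is circular. Similarly, the inference $\overline{\pi_2(B)}\subsetneq\PP^{m-1}$ in your first step does not follow merely from $B$ being a proper closed subset of $\cL_\cA$; it needs the dimension bound $\dim \cL_\cA\le m-1$ (equivalently $d\ge n-1$), which again is the unproven finiteness statement.
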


\begin{proof}
  Under the assumption $c_d>0$, the projection $\pi\,\colon\, \cL_{\cA} \rightarrow \PP^{m-1}$ is finite-to-one. A general fiber has cardinality $c_d$ and is described by \eqref{eq:criticalPointsLikelihood}.
  \end{proof}

\begin{remark}
  \label{rem:implicitVsParametricMLdeg}
  The above setup differs from the one common to algebraic statistics in several aspects:
  First, ``generic choices of $s$'' means generic in a subspace of
  $\CC^m$. This is usually  $\{s: \sum_{i=1}^m d_i s_i = 0\}$.
   Second, Proposition \ref{prop:MLDegFromMultiDeg} gives a \emph{parametric} version of the ML degree, whereas \cite{BCF, HKS, HuhSt} define the ML degree \emph{implicitly}. Moreover, in \cite{CHKS}, the hypersurface defined by $\sum_{i=1}^{m}f_i$ is added to the arrangement. Only this modification allows the interpretation of $\mathcal{A}$
     as a statistical model, as described in the paragraph above. If this hypersurface is included in $\cA$ and we assume that the parametrization is finite-to-one, then our parametric ML degree is an integer multiple of the implicit ML degree. Under these assumptions, there is a flat morphism from the parametric to the implicit likelihood correspondence in \cite{HuhSt}. The induced map on Chow rings is injective, and the claim follows. Our definition via the multidegree of $\cL_{\cA}$ allows for a sensible notion even in the case where the parametrization is not finite-to-one. This appears for example in the formulation of
     toric models given below.
\end{remark}

For illustration
we revisit
the \emph{coin model} from the introduction of \cite{HKS}. 

\begin{example} \label{ex:gambler}
A gambler has two biased coins,
one in each sleeve, with unknown biases~$t_2,t_3$.  They select one of
them at random, with probabilities $t_1$ and $1-t_1$, toss that
coin four times, and record the number of times heads comes up. If
$p_i$ is the probability of $i-1$ heads then
\newcommand{\myScaleFactor}{1}
\begin{equation}
\label{eq:coins}
\scalemath{\myScaleFactor}{
  \begin{matrix}
    p_1 & = & t_1 \cdot (1-t_2)^4 & + & (1-t_1) \cdot (1-t_3)^4, \\
    p_2 & = & 4 t_1 \cdot t_2 (1-t_2)^3 & + & 4 (1-t_1) \cdot t_3 (1-t_3)^3 ,\\
    p_3 & = & 6 t_1 \cdot t_2^2 (1-t_2)^2 & + & 6 (1-t_1) \cdot t_3^2 (1-t_3)^2, \\
    p_4 & = & 4 t_1  \cdot t_2^3 (1-t_2) & + & 4 (1-t_1) \cdot t_3^3 (1-t_3), \\
    p_5 & = & t_1 \cdot t_2^4 & + & (1-t_1) \cdot t_3^4 .
  \end{matrix}
}
\end{equation}
We homogenize
by setting $t_j = x_j/x_4$ for $j\in\{1,2,3\}$. 
Let $f_i(x)$ be the numerator of $p_i(t)$
after this substitution.  This is a homogeneous polynomial in four
variables of degree $d_i  = 5$.  We finally set
$f_6(x) = x_4$ and $d_6=1$.  If we now take
$s_6 = - d_1 s_1 - d_2 s_2 - \cdots - d_{5} s_{5}$, then we are in
the setting of Section~\ref{sec:arrangements}.
Namely, we have an arrangement $\mathcal{A}$ of $m=6$ surfaces in $\PP^3$.

We observe $N$ rounds of this game, and we record the outcomes in the
data vector $(s_1,s_2,s_3,s_4,s_5) \in \NN^5$, where $s_i$ is the
number of trials with $i-1$ heads.  Hence, $\sum_{i=1}^5 s_i = N$.
Our assignment $s_6 = - 5N$ ensures that
$d_1 s_1 + \cdots + d_6 s_6$ lies in $I_0(\cA)$.
The task in statistics is to learn the parameters $\hat t_1, \hat t_2, \hat t_3$
from the data $s_1,\ldots,s_5$.
%
The ML degree
is~$24$.
Indeed, the equations $\nabla\ell_{\cA}(x,s)=0$ have $24$ complex solutions~$x =
(t,1) \in \PP^4$ for random
data $s_1,s_2,s_3,s_4,s_5$, provided
$t_1 (1-t_1) (t_2 - t_3) \not= 0$.  In \cite{HKS} it is reported that
the ML degree for this model is $12$.  This factor two arises because of the
two-to-one parametrization~\eqref{eq:coins}.

In summary, our projective formulation realizes
 the coin model as an arrangement $\cA$ in
$\PP^3$ with $n=4, m=6$, and $d_1=d_2 = d_3 = d_4 = d_5 = 5$ and $d_6=1$.  The quintics $f_1,f_2,f_3,f_4,f_5$ have $13, 12, 9, 6, 3$ terms
respectively. For instance, the homogenization of $p_4(t)$ yields
\[
f_4(x) \,=\, 4(- x_1x_2^4 +  x_1x_3^4 + x_1x_2^3x_4 - x_1x_3^3x_4 - x_3^4x_4 + x_3^3x_4^2) .
\]
The pre-likelihood ideal $I_0(\cA)$ has six generators, of
bidegrees $
\begin{psmallmatrix}
  0\\
  1
\end{psmallmatrix},
\begin{psmallmatrix}
  2\\
  1
\end{psmallmatrix},
\begin{psmallmatrix}
  10\\
  1
\end{psmallmatrix}
$, and $
\begin{psmallmatrix}
  13\\
  1
\end{psmallmatrix}
$ thrice.  The first
ideal generator is $5 (s_1 + s_2 + s_3 + s_4 + s_5) + s_6$, and the
second ideal generator is
\[ 4 s_6( x_1  x_2 -  x_1  x_3 +  x_3  x_4)\, +\,
 5(s_2 + 2 s_3 + 3 s_4 + 4 s_5) x_4^2 . \]
 We invite the reader to test whether
     $\mathcal{A}$ gentle. Is
 $I_0(\cA)$ equal to the likelihood ideal $I(\cA)$?
\end{example}

We now turn to the  two-parameter models
on four states seen in the Introduction of \cite{CHKS}.

\begin{example} \label{ex:fourconics}
Let $n=3$, $m=5$, $\,d_1=d_2=d_3=d_4 = 2$,
and $d_5 = 1$. This gives
arrangements of four conics and the line at infinity in $\PP^2$.
One very special case is the
independence model for two binary random variables,
in a homogeneous formulation:
\[
  f_1 = x_1 x_2,
  f_2 =  (x_3-x_1)x_2,
  f_3 =x_1 (x_3-x_2),
  f_4 =  (x_3-x_1)(x_3-x_2),
  f_5 = x_3.
\]
The arrangement is tame and free (see Section \ref{sec:3adjectives}), but not gentle; the pre-likelihood ideal is
\[
    \langle s_+ ,\,s_5,\,x_3 \rangle \,\cap \,
    \langle\, 2 s_+ + s_5,\,
    s_+\, x_1 - (s_1 \!+\! s_3) \,x_3,\,
    s_+ \,x_2 - (s_1 \!+\! s_2) \,x_3,\,
    (s_1 \!+\! s_2) \,x_1 - (s_1\!+\! s_3)\, x_2  
    \rangle.
\]     
Here $s_+ = s_1 + s_2 + s_3 + s_4$ is the sample size. The likelihood ideal
is the second intersectand. Its four generators
confirm that the ML degree equals $1$. The likelihood ideal is not a complete intersection since $\codim(I)=3$.
For the implicit formulation see \cite[Example 2.4]{BCF}.

As in the Introduction of \cite{CHKS}, we compare this
with arrangements given by
random ternary quadrics $f_1,f_2,f_3,f_4$ plus $f_5 = x_3$.
Such a generic arrangement is
tame and gentle. The likelihood ideal equals the pre-likelihood ideal.
It is minimally generated by seven polynomials: the linear
form $2(s_1+s_2+s_3+ s_4) + s_5$,  four generators
of degree $
\begin{psmallmatrix}
  6\\
  1
\end{psmallmatrix}
$, and two generators of degree $
\begin{psmallmatrix}
  7 \\1
\end{psmallmatrix}
$.
The bidegree (\ref{eq:bidegree}) of the likelihood correspondence 
$\mathcal{L}_\cA \subset \PP^4 \times \PP^2$
equals $25 p^2 + 6 pu + u^2$.
Hence, the ML degree equals $25$,
as predicted by \cite[Theorem 1]{CHKS}.
\end{example}

\smallskip

In algebraic statistics, a model is called {\em toric}
if each probability $p_i$  is a monomial in the model parameters. It is represented by a toric variety $X_A$, the image closure of a map
\[
    \phi_A \colon (\CC^*)^n \rightarrow \PP^N,\quad (x_1,\dots, x_n) \mapsto (x^{a_0}:\dots :x^{a_N}),
\]
where $A$ is an integer matrix of size $n\times (N+1)$ with columns
$a_0,\dots,a_N$.  By \cite{HuhVeryAffine}, the ML degree of $X_A$ is
the signed Euler characteristic of $X_A\backslash \mathcal{H}$, where
$\mathcal{H}$ is the hyperplane arrangement given by
$\{y_0,\dots,y_N,y_0+\dots +y_N\}$ in which the $y_i$ are the
coordinates of~$\PP^N$.

Let $f = x^{a_0}+\dots +x^{a_N}$ be the coordinate sum. Assuming that the map $\phi_A$ is one-to-one, it gives an isomorphism of very affine varieties between $\{x\in (\CC^*)^n \mid f(x) \neq 0\} 
$ and $X_A\backslash \mathcal{H}$. Its signed Euler characteristic is equal to the number of critical points of the function 
\begin{equation}
    \label{eq:likelihoodToric}
    x_1^{s_1}x_2^{s_2}\dots x_n^{s_n} f^{s_{n+1}}, 
\end{equation}
for generic values $s_1,\dots,s_n$ and $s_{n+1}=-\frac{1}{d}(s_1+\dots +s_n)$, where $d = \deg(f)$. We can encode this in the arrangement setup by setting $f_i = x_i$ for $i=1,\dots,n=m-1$ and $f_m = f$. The likelihood function of this arrangement $\cA = \{x_1,\dots,x_n,f\}$ agrees with \eqref{eq:likelihoodToric}. The ML degree of $X_A$ is equal to the ML degree of $\cA$. In 
situations where $\phi_A$ is not one-to-one, the ML degree of $\cA$ is a product of the degree of the fiber with the ML degree of $X_A$.\par   


One instance with $n=3$ was seen in
Example \ref{ex:34notgentle}.
Our representation of a toric model depends on the
choice of the parametrization and so does gentleness of the
arrangement~$\cA$.  This is one reason why previous
work on likelihood geometry emphasized the implicit
representation. We illustrate the toric setup with
the most basic model  in algebraic statistics.

\begin{example}[Independence]
The independence model for two binary random variables is
\[
p_{00} = a_0 b_0 , \,\,
p_{01} = a_0 b_1,\,\,
p_{10} = a_1 b_0,\,\,
p_{11} = a_1 b_1.
\]
This parametrizes the Segre surface
$\{p_{00} p_{11} = p_{01} p_{10} \}$  in $\PP^3$.
This model is known to have ML degree $1$. The four conics formulation
of this model given in Example~\ref{ex:fourconics} was not gentle.

We can represent this independence model as a 
toric model  by setting $n=4$ and
\[
\mathcal{A} \, = \, \{\,a_0,a_1,b_0,b_1, \,f\, \} \quad
\hbox{with} \,\,f = a_0 b_0 + a_0 b_1 + a_1 b_0 + a_1 b_1 .
\]
This is a gentle arrangement of $m=5$ surfaces in $\PP^3$. Its likelihood ideal equals
\[
I(\cA) \,=\, I_0(\cA) \,=\, \bigl\langle
\,s_1+s_2+s_5,\,
s_3+s_4+s_5,\,
(b_0 +b_1) s_4 + b_1 s_5,\,
(a_0+a_1) s_2 + a_1 s_5\,
\bigr\rangle
\]
The arrangement $\mathcal{A}$  is an overparametrization.
A minimal toric model would live
in the plane $\PP^2$. For instance,
$ \mathcal{A}' \,\,=\,\, \{\, x,y,z,\, x y+xz+yz+z^2 \,\} $.
This arrangement is also gentle. Its multidegree is
$p^2 u + 2 p u^2 + u^3$.
One can compute $I_0(\cA') = I(\cA')$ as shown in Section \ref{sec:software}.
\end{example}

\smallskip

We finally turn to \emph{scattering equations} in particle physics.  In the
CHY model~\cite{CHY} one considers scattering equations on the moduli
space $\mathcal{M}_{0,n}$ of $n$ labeled points in $\PP^1$.  The
scattering correspondence appears in \cite[eqn~(0.2)]{Lam}, and is
studied in detail in \cite[Section~3]{Lam}.  The formulation in
\cite[eqn~(3)]{ST} expresses the positive region $\mathcal{M}^+_{0,n}$ of $\mathcal{M}_{0,n}$ as a linear
statistical model of dimension $n\!-\!3$ on
$n(n\!-\!3)/2$ states. Adding another coordinate for the homogenization, we have $m = \binom{n-1}{2}$ in our setup.
The ML degree equals $(n-3)!$.  If the data $s_1,\ldots,s_m$ are real,
then all $(n-3)!$ complex critical points are real by Varchenko's
Theorem \cite[Proposition~1]{ST}.  The case $n=6$ is worked out
in \cite[Example 2]{ST}.  This model has $m-1=9$ states and
the ML degree is $6$.  The nine probabilities $p_i$ are given in
\cite[eqn~(6)]{ST}.  These $p_i$ sum to $1$ and all six critical
points in \cite[eqn~(9)]{ST} are real. \par 
Usually, we think of $\mathcal{M}_{0,n}$ as the set of points for which the $2\times 2$ minors of the matrix 
\[
    \begin{bmatrix}
      0 & 1 & 1 & \dots & 1 & 1 \\
      -1 & 0 & y_1 & \dots & y_{n-3} & 1 \\ 
    \end{bmatrix}
\]
are non-zero. If we homogenize the resulting equations by considering the $2\times 2$ minors of 
\[
    \begin{bmatrix}
      0 & 1 & 1 & \dots & 1 & 1 \\
      -1 & x_1 & x_2 & \dots & x_{n-2} & x_{n-1} \\ 
    \end{bmatrix},
\]
then  $\mathcal{M}_{0,n}$ becomes the complement of the braid
arrangement. This is the graphic arrangement of $K_{n-1}$ (see Section
\ref{sec:hyperplanes}), defined by the $\binom{n-1}{2}$ linear forms
$x_i-x_j$ for $1\le i<j\le n$.

For example, $\mathcal{M}_{0,5}$ can be viewed as the complement of
the arrangement in Example~\ref{ex:braid4}.  In this case, the image
of the likelihood correspondence in $\PP^2 \times \PP^5$ under the map
to data space $\PP^5$ is the hyperplane
$\{s_{12} + s_{13} + s_{14} + s_{23} + s_{24} + s_{34} = 0\}$.  This
map is $2$-to-$1$. By \cite[Section~2]{ST}, the fibers are the two
solutions to the scattering equations in the CHY model for five
particles.
A similar identification works for
every graphic arrangement, when some edges of $K_{n-1}$ are deleted. Physically, this corresponds to setting some Mandelstam invariants to zero.  The article \cite{EPS}  studies
graphic arrangements of ML degree one from a physics perspective.  For
instance, in \cite[Example~1.3]{EPS}, we see $K_5$ with three edges
removed.

\section{Gentle, free and tame arrangements}
\label{sec:3adjectives}

\epigraph{I was tame, I was gentle 'til  \\ the circus life made me mean.}{Taylor Swift}

The concept of freeness has received considerable attention in the theory of
hyperplane arrangements, see e.g.~\cite[Theorem 4.15]{orlik2013arrangements}.
Also, the notion of tameness \cite[Definition~2.2]{CDFV} appeared in this context. In this section we explore
the relationship between these concepts and the gentleness of an arrangement. We shall 
explain the following (non)implications:
\begin{center}
  \begin{tikzcd}
    \text{free} \arrow[r, Rightarrow] \arrow[rd, "\text{nonlinear}"', Rightarrow, bend right, "\boldsymbol{\backslash}"{anchor=center,sloped}] & \text{tame} \arrow[l, Rightarrow, bend right, "\boldsymbol{\backslash}"{anchor=center,sloped}] \arrow[d, "\text{linear}"' {yshift=-0.5em}, Rightarrow, bend right] \arrow[d, "\text{nonlinear}" {yshift=-0.5em}, "\boldsymbol{\backslash}"{anchor=center,sloped},  Rightarrow, bend left]\\
    & \text{gentle}         
  \end{tikzcd}
\end{center}

\begin{definition}
A hypersurface arrangement $\cA$ is \emph{free} if $\logDer(\cA)$ is a free
$R$-module.
\end{definition}

By Lemma~\ref{lem:modules}, $\cA$ is free if and only if the
likelihood module $M(\cA)$ has projective dimension one. Let $\Omega^1(\cA) = \Hom(\Der(\cA), R)$ be the module of logarithmic differentials with poles along $\cA$. Nonstandard, but justified by
\cite[Proposition~2.2]{denham2009complexes}, we define
\[
\Omega^p(\cA) \,\,=\,\, \left( \bigwedge\nolimits^p \Omega^1(\cA) \right)^{\! \vee\vee}.
\]

\begin{definition} \label{def:tame} A hypersurface arrangement $\cA$ is \emph{tame} if
\[
\pd_R(\Omega^p(\cA))\, \leq\, p \quad \text{for all } \,\,0\leq p\leq r(\cA),
\]
where $r(\cA)$ is the smallest integer such that $\Omega^p(\cA) = 0$ for all $p > r(\cA)$.
\end{definition}

Clearly, every free arrangement is tame. The braid arrangement from Example \ref{ex:braid4} is free. We have already seen that the braid arrangement is also gentle. This holds more generally.

\begin{theorem} \label{thm:tlaag}
  Tame linear arrangements are gentle.
\end{theorem}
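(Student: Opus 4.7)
The plan is to invoke the prior work of Cohen--Denham--Falk--Varchenko \cite{CDFV}, which was written specifically for the linear case and essentially establishes the desired equality of ideals. The paper itself flags the connection: ``For hyperplane arrangements, our ideals were introduced by Cohen et al.~\cite{CDFV} who called them the logarithmic ideal and the meromorphic ideal.'' The substantive theorem of \cite{CDFV} is that these two ideals coincide when the underlying hyperplane arrangement is tame, which is precisely the content we need.

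The first step is to nail down the dictionary. For a linear arrangement, Theorem~\ref{thm:evaluation} identifies the $(\ast,1)$-graded part of $I_0(\cA)$ with the image of the evaluation map $\theta \mapsto \theta(\ell_\cA)$ ranging over $\Der(\cA)$. After multiplying through by $f_1 \cdots f_m$ to clear the logarithmic denominators, these generators are exactly those of the \emph{logarithmic ideal} of \cite{CDFV}. Since $I_0(\cA)$ is in fact generated in the $(\ast,1)$-bidegree by construction (see \eqref{eq:symM}), the two ideals agree on the nose. On the other side, the likelihood ideal $I(\cA)$ is by Definition~\ref{def:likelihoodCorr} the vanishing ideal of the Zariski closure of the critical locus of $\ell_\cA$ on the arrangement complement; this matches the \emph{meromorphic ideal} of \cite{CDFV}.

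The second step is to invoke the main theorem of \cite{CDFV}, which states that for a tame hyperplane arrangement $\cA$ the logarithmic and meromorphic ideals are equal. Via the dictionary above, this is $I_0(\cA) = I(\cA)$, which by Definition~\ref{def:gentle} is exactly gentleness of $\cA$.

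The main obstacle, and the only nontrivial bookkeeping, is the reconciliation of the two tameness conditions: our Definition~\ref{def:tame} uses the reflexive hull $\left(\bigwedge^{p}\Omega^1(\cA)\right)^{\vee\vee}$, while \cite{CDFV} phrases tameness more directly in terms of the projective dimension of $\Omega^p(\cA)$. For hyperplane arrangements these formulations agree, because the reflexive-hull adjustment in the definition of $\Omega^p(\cA)$ is harmless in the range relevant to tameness, as developed in \cite{denham2009complexes}. A secondary sanity check is that the data variables $s_i$ in our setup play the role of the formal exponents $\lambda_i$ in the master function of \cite{CDFV}; this is purely a change of symbols. Once these matches are verified, the theorem follows with no further computation.
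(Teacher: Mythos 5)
Your proposal is correct and takes essentially the same route as the paper: the paper's proof likewise just invokes \cite{CDFV} (their Corollary~3.8, combined with Proposition~\ref{prop:localize}) together with exactly the dictionary you describe — their logarithmic ideal is $I_0(\cA)$, their variety $\overline{\Sigma}$ is $\cL_\cA$, and their weights play the role of the $s_i$. One small correction to your dictionary: no multiplication by $f_1\cdots f_m$ is needed, since for $\theta\in\Der(\cA)$ each term $s_i\,\theta(f_i)/f_i$ is already a polynomial — this is precisely why the evaluations $\theta(\ell_\cA)$ themselves are the generators of $I_0(\cA)$ matching the generators of the logarithmic ideal in \cite{CDFV}.
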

  
\begin{proof}
  The statement follows from \cite[Corollary 3.8]{CDFV} and Proposition \ref{prop:localize}.
  The ideal $I$ in \cite{CDFV} is our pre-likelihood ideal $I_0(\cA)$, and their variety
  $\overline{\Sigma}$ is our likelihood correspondence  $\mathcal{L}_\cA$.
\end{proof}
 
In the linear case, the modules $\Omega^p(\cA)$ are reflexive and hence of projective dimension at most $r(\cA)-2$.
Therefore, every linear arrangement in $\PP^2$ is tame and thus also gentle.
Although freeness is a strong property for an arrangement, for
hypersurfaces it does not necessarily imply gentleness. We saw a free
arrangement that is not gentle in Example \ref{ex:fourconics}.  We do
not know whether the reverse implication ``gentle $\Rightarrow$ tame''
holds. To the best of our knowledge, this is unknown even for the
linear case; see the Introduction of \cite{CDFV}.

\begin{problem}
\label{prob:gentle_tame}
  Is every gentle arrangement tame?
\end{problem}

For a linear arrangement, freeness is equivalent to the (pre-)likelihood ideal being a complete intersection \cite[Theorem~2.13]{CDFV}. As Example \ref{ex:fourconics} shows, this is not necessarily true in the hypersurface case. However, under the additional assumption that $\cA$ is gentle, we can generalize \cite[Theorem~2.13]{CDFV}. This connects to \cite{HuhSt} where the authors ask for a
characterization of statistical models whose likelihood ideal is a complete intersection.
\begin{theorem}

\label{thm:freeCI}
Let $\cA$ be a gentle arrangement of hypersurfaces. Then $\cA$ is free if and only if
the likelihood ideal $I(\cA)$ is a complete intersection.
\end{theorem}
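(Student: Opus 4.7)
The plan is to reduce both sides of the equivalence to the single statement that $\logDer(\cA)$ is free of rank~$n$. Gentleness gives enough control over the numerical invariants of $I(\cA)$ and $M(\cA)$ to make this reduction precise, via three ingredients: the height of $I(\cA)$, the rank of $\logDer(\cA)$, and a count of minimal generators.

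The height computation is immediate from the proof of Lemma~\ref{lem:liklihoodIPrime}: the affine cone over $\cL_\cA$ is a vector bundle of rank $m-n$ over a dense open subset of $\CC^n$, hence has dimension $m$, so $I(\cA)$ has height $n$ in $R[s]$. The rank of $\logDer(\cA)$ is pinned down by gentleness. Since $\cR(M(\cA)) = R[s]/I_0(\cA)$ is then a domain of Krull dimension $\dim R + \rk M(\cA)$ by a transcendence-degree computation, comparing with $\dim R[s]/I(\cA) = n + m - n = m$ forces $\rk M(\cA) = m-n$, i.e.\ $\rk \logDer(\cA) = n$. For the generator count, I would combine the fact that $I_0(\cA)$ is generated in bidegree $(*,1)$ with the $R$-module isomorphism $I_0(\cA)_{(*,1)} \iso \logDer(\cA)$ induced by $v \mapsto (s_1,\dots,s_m)\cdot v$; a graded Nakayama argument in the variables $x_1,\dots,x_n$ then identifies the minimal number of bihomogeneous generators of $I(\cA) = I_0(\cA)$ with the minimal number of $R$-module generators of $\logDer(\cA)$.

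With these three ingredients in hand, both implications follow quickly. For ``free $\Rightarrow$ CI'', freeness together with rank $n$ gives $\logDer(\cA) \iso R^n$, so $I(\cA)$ is minimally generated by exactly $n$ elements equal to its height, hence a complete intersection. For the converse, if $I(\cA)$ is a complete intersection in the Cohen--Macaulay ring $R[s]$, then its minimal number of generators equals its height, $n$, so $\logDer(\cA)$ admits a surjection $R^n \twoheadrightarrow \logDer(\cA)$; both modules have rank $n$, so the kernel has rank zero, and being a submodule of the torsion-free module $R^n$ it must vanish, making $\logDer(\cA) \iso R^n$ free.

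The main subtlety I anticipate is the third ingredient---carefully checking that bihomogeneous generators of $I_0(\cA)$ can be chosen entirely in bidegree $(*,1)$ and that they correspond bijectively under the contraction to minimal $R$-module generators of $\logDer(\cA)$. The remainder is essentially dimension counting and the elementary fact that a surjection between torsion-free modules of the same rank over a domain is an isomorphism.
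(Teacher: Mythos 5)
Your reduction hinges on the two numerical claims $\operatorname{ht} I(\cA) = n$ and $\rk \logDer(\cA) = n$, and both are false in general. The rank of the linear system in $s$ appearing in the proof of Lemma~\ref{lem:liklihoodIPrime} is the generic rank $r$ of $\operatorname{Jac}(F)$, $F=(f_1,\dots,f_m)$, which can be strictly smaller than $n$; the fibers then have dimension $m-r > m-n$, so $\dim R[s]/I(\cA) = n+m-r$, not $m$. This is not a fringe case excluded by the theorem: the braid arrangement of $K_4$ (Example~\ref{ex:braid4}) is free and gentle with $n=4$, yet $\logDer(\cA) \cong R^3$ and $I(\cA)$ is a complete intersection of height $3$ with three generators; your forward direction would instead predict four minimal generators and height $4$. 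The same degeneration occurs for every graphic arrangement viewed in $\PP^{n-1}$ and for the non-finite-to-one toric parametrizations the paper explicitly intends to cover, so the anchor value $n$ cannot be right even though the phrase ``vector bundle of rank $m-n$'' in Lemma~\ref{lem:liklihoodIPrime} suggests it.

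The gap is repairable, and your own dimension count is the repair: since $R[s]/I(\cA) \cong \cR(M(\cA))$ by Theorem~\ref{thm:twoIdeals} (gentleness not needed for this), comparing $\dim \cR(M(\cA)) = n + \rk M(\cA) = n+m-\rk\logDer(\cA)$ with $\dim R[s]/I(\cA) = n+m-\operatorname{ht} I(\cA)$ gives exactly the equality you need, $\operatorname{ht} I(\cA) = \rk\logDer(\cA) =: r$, without ever presupposing the fiber dimension. With $n$ replaced by $r$ throughout, the rest of your argument is sound: the bigraded Nakayama identification $\mu(I_0(\cA)) = \mu_R(\logDer(\cA))$ works because $I_0(\cA)$ is generated in $s$-degree one and $I_0(\cA)_{(\ast,0)}=0$, and a surjection $R^{r} \twoheadrightarrow \logDer(\cA)$ between torsion-free modules of rank $r$ indeed has zero kernel. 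After this fix, your forward implication coincides with the paper's (free $\Rightarrow$ number of generators $=$ codimension $\Rightarrow$ CI), while your converse is genuinely different and more elementary: the paper lifts the complete-intersection generators to relative derivations $\theta_i \in \Der_S(\cA)$ over $S=\CC[s_1,\dots,s_m]$, deduces that $\Der_S(\cA)$ is free, and then descends freeness to $\Der(\cA)$ via tensor-hom adjunction and faithful flatness of $S\otimes_\CC R$; your generator-counting route avoids that machinery, at the cost of spelling out the correspondence between minimal bihomogeneous generators of $I(\cA)=I_0(\cA)$ and minimal $R$-generators of $\logDer(\cA)$, which you rightly flag as the delicate step.
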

The proof uses the notion of relative modules of logarithmic differential forms.
For a ring $S$ and an $S$-algebra $T$, $\Omega^1_{T/S}(\cA)$ denotes the $T$-module of $S$-valued Kähler differentials with poles along~$\cA$. For $S=\CC$ and $T=R$, one recovers the modules $\Omega^1(\cA)$ from Section \ref{sec:3adjectives}.

\begin{proof}
Suppose $\cA$ is free of rank $l$, i.e.~the 
module $\logDer(\cA)$ is a free module
generated by $\left\{ D_1,\dots,D_l \right\}$, the 
columns of the matrix $A$ from Section~\ref{sec:arrangements}.
 Consequently, the pre-likelihood
ideal $I_0(\cA)$ has $l$ generators. By assumption, the arrangement $\cA$ is gentle, and hence
$I_0(\cA) = I(\cA)$. Since $\cL_{\cA}$ has codimension $l$, this shows that
$I(\cA)$ is a complete intersection.\par
Conversely, assume $I(\cA)$ has $l$ generators $g_1,\dots,g_l$. 
Similarly to Theorem \ref{thm:evaluation},
for $1\leq i \leq l$, let $\theta_i \in \Der_S(\cA)$ be a derivation
for which $\theta_i(\ell_{\cA}) = g_i$. Here,
$S = \CC[s_1,\dots,s_m]$ and $\Der_S(\cA)$ is the module of $S$-linear logarithmic derivations on $S\otimes_{\CC} R$. The module $\Der_S(\cA)$ is generated by the $\theta_i$ and has rank $l$, hence it is free. By extension of scalars,
\[
    \Omega^1_{R/\CC}(\cA) \otimes_{R} (S\otimes_{\CC} R) \,\cong \,\Omega^1_{S\otimes R/ S}(\cA),
\]
and $\Omega^1_{S\otimes R/ S}(\cA)$ is dual to $\Der_S(\cA)$. Then, by tensor-hom adjunction, we obtain 
\begin{align*}
  \Der_S(\cA) & \,\cong \,\Hom_{S\otimes R}(\Omega^1_{R/\CC}(\cA) \otimes_{R}(S\otimes_{\CC} R),\, S\otimes_\CC R) \\ & \,\cong \,\Hom_R(\Omega^1_{R/ \CC}(\cA),\, \Hom_{S\otimes R}(S \otimes_\CC R,\, S \otimes_\CC R))\\
  & \,\cong \,\Hom_R(\Omega^1_{R/ \CC}(\cA),\, S \otimes_\CC R).
\end{align*}
Since $\Omega^1_{R/\CC}(\cA) = \Omega^1(\cA)$ is finitely presented and $S\otimes_\CC R$ is faithfully flat, it follows that $\Der(\cA) = \Hom(\Omega^1(\cA),\, R)$ is free, so $\cA$ is a free arrangement.
\end{proof}

In the case of a free and gentle arrangement, it is now easy to read off the ML degree.

\begin{corollary} \label{cor:MLdegproduct}
Let $\cA$ be free and gentle. If the columns of $A$ have degrees $d_1,\ldots,d_l$ then
\begin{equation}
\label{eq:MLdegCI}
\mldeg(\cA) \,\,= \prod_{i\,:\,d_i > 0} d_i.
\end{equation}
\end{corollary}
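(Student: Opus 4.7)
The plan is to combine the two hypotheses via Theorem~\ref{thm:freeCI} to pin down $I(\cA)$ as an explicit bihomogeneous complete intersection, read off the bidegrees of its defining equations directly from the columns of $A$, and then compute the cohomology class of $\cL_\cA$ by the product formula for classes of complete intersections, extracting $\mldeg(\cA)$ as the leading coefficient.

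Concretely, since $\cA$ is gentle, Definition~\ref{def:gentle} gives $I(\cA) = I_0(\cA)$; since $\cA$ is also free, Theorem~\ref{thm:freeCI} further asserts that this ideal is a complete intersection. I would choose $A$ so that its $l$ columns form a homogeneous free basis of $\logDer(\cA)$ with column degrees $d_1,\dots,d_l$. The pre-likelihood ideal is then generated by the $l$ entries of $(s_1,\dots,s_m)\cdot A$, and the $i$-th generator has bidegree $(d_i, 1)$ in the bigrading $\deg(x_j) = \begin{psmallmatrix}1\\0\end{psmallmatrix}$, $\deg(s_j) = \begin{psmallmatrix}0\\1\end{psmallmatrix}$. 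Together these observations identify $I(\cA)$ with a complete intersection cut out by $l$ bihomogeneous forms of bidegrees $(d_1, 1), \dots, (d_l, 1)$.

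Next, I would pass to the cohomology ring $\ZZ[p,u]/\langle p^n, u^m\rangle$ of $\PP^{n-1}\times\PP^{m-1}$. A bihomogeneous form of bidegree $(a, b)$ has class $ap + bu$, and for a complete intersection the class of the zero scheme is the product of the classes of its defining equations. This yields
\[
  [\cL_\cA] \,=\, \prod_{i=1}^l (d_i\, p + u),
\]
a homogeneous form of degree $l$ in $(p, u)$, matching the shape of \eqref{eq:bidegree}. Setting $S = \{i : d_i > 0\}$ and $k = l - |S|$, the product factors as $u^k \cdot \prod_{i \in S}(d_i p + u)$, so the highest power of $p$ with nonzero coefficient is $p^{|S|}$, with coefficient $u^k \cdot \prod_{i \in S} d_i$. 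By Definition~\ref{def:MLdeg}, this leading coefficient is $\mldeg(\cA) = \prod_{i : d_i > 0} d_i$, giving \eqref{eq:MLdegCI}. The main point to verify carefully is the correspondence between the columns of $A$ and the generators of a genuine regular sequence cutting out $\cL_\cA$; this is precisely where the combination of freeness and gentleness is essential, through Theorem~\ref{thm:freeCI}.
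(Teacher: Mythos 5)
Your proposal is correct and follows essentially the same route as the paper: invoke Theorem~\ref{thm:freeCI} to identify $I(\cA)=I_0(\cA)$ as a complete intersection generated by the entries of $(s_1,\dots,s_m)A$, which are linear in $s$ with bidegrees $(d_i,1)$, so that $[\cL_\cA]=\prod_i(d_ip+u)$ and the leading coefficient in the sense of Definition~\ref{def:MLdeg} is $\prod_{d_i>0}d_i$. Your explicit handling of the degree-zero columns (e.g.\ the Euler relation contributing a factor $u$) is a slightly more careful spelling-out of what the paper leaves implicit in ``leading coefficient of this binary form.''
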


\begin{proof}
By definition, the ML degree is the
leading coefficient in the multidegree of $I(\cA)$. Since $\cA$ is
free and gentle, by Theorem~\ref{thm:freeCI}, the likelihood ideal is
a complete intersection,  and  it is linear in the $s$ variables. Therefore, the
cohomology class in (\ref{eq:bidegree}) is the product
\[
\left[ \cL_{\cA} \right]\, \,=\, \prod_{i=1}^{r(\cA)} \left( d_i p + u \right).
\]
Our assertion now follows because (\ref{eq:MLdegCI})
is the leading coefficient of this binary form.
\end{proof}
  
\begin{example}
For the braid arrangement in Example~\ref{ex:braid4}, the matrix
$A^T$ has two rows of positive degree.
Hence, by \eqref{eq:MLdegCI}, $\mathrm{MLdeg}(\cA) = 1\cdot 2 = 2$.
For general $n$, the braid arrangement $\cA(K_n)$ has ML
degree $(n-3)!$, as stated in our physics discussion about $\mathcal{M}_{0,n}$
in Section \ref{sec:likelihood}.
\end{example}

Symmetric algebras and Rees algebras are ubiquitous in commutative
algebra.  Many papers studied them, especially when $M$ has a short
resolution.  The \emph{Fitting ideals} of $M$~play an essential role.
Let $I_t(A)$ be the ideal generated by the $t\times t$-minors of a
matrix $A \in R^{m\times l}$ with $M= \coker (A)$.  These ideals are
independent of the presentation of~$M$ \cite[Section~20.2]{Eis}.

Early work of Huneke \cite[Theorem~1.1]{Hun} characterizes when the
symmetric algebra of a module $M$ with $\pd(M) = 1$ is a domain, and
thus when a free arrangement is gentle.  This happens if and only if
$\depth (I_t(A), R) \ge \rk(A) + 2 - t$ for all $t=1,\dotsc, \rk(A)$.
Huneke also showed that in this case the symmetric algebra is a
complete intersection, one direction of our Theorem~\ref{thm:freeCI}.
Simis and Vasconcelos~\cite{SV81} obtained similar results
concurrently.

In the 40+ years since these publications, many variants have been
found. For example, authors studied for which $k$ all inequalities
$\depth (I_t(A)) \ge \rk(A) + (1+k) - t$ hold.  If this is the case,
then $M$ is said to have \emph{property~$\mathcal{F}_k$}.  Assuming
$\mathcal{F}_k$ and related hypotheses, properties
(e.g.~Cohen--Macaulay) of symmetric and Rees algebras of modules were
studied.

A notable special case arises if the double dual 
$M^{\vee\vee}$ of a module $M$  is free.  In
\cite[Section~5]{SUV} such an $M$ is called an \emph{ideal module}
because it behaves very much like an ideal.  
Every ideal module $M$
is the image of a map of free modules, and various criteria for
gentleness (i.e.\ linear type) of $M$ can be derived. These might give rise to more efficient computational tests for gentleness. For example, the likelihood module of the octahedron in
Example~\ref{ex:octahedron1} is an ideal module.
In conclusion, we invite commutative algebraists to
join us in exploring the likelihood geometry of arrangements,
and its applications ``in the sciences''.

\section{Graphic arrangements}
\label{sec:hyperplanes}

Graphic hyperplane arrangements are a mainstay of combinatorics.  They 
are subarrangements of the braid arrangement.
In particle physics \cite{EPS, Lam} they arise from the moduli space $ \mathcal{M}_{0,n}$.
 Fix the 
polynomial ring $R = \CC[x_1,\dotsc, x_n]$, and let $G = (V,E)$ be an
undirected graph with vertex set $V = \{1,\ldots, n\}$.
The \emph{graphic arrangement} $\cA(G)$
consists of the hyperplanes $\set{ x_i - x_j : \left\{ i,j \right\} \in E}$. 
This arrangement lives in $\PP^{n-1}$, but can also be viewed in the space $\PP^{n-2} = \mathrm{Proj}(\CC[x_i - x_n \,:\, i=1,\dots,n-1])$. We mostly consider the latter in this section.

A classical result due to Stanley, Edelman and Reiner states that
$\cA(G)$ is free if and only if the graph $G$ is chordal (see
\cite{AKMM} for further developments).  The complete graph $G = K_4$
is chordal and we saw that
$\logDer(\cA({K_4})) \simeq R^3$.
The octahedron in Example~\ref{ex:octahedron1} is not chordal.

In this section, we examine the notion of gentleness for graphic arrangements.
A priori, it is not clear that there exist graphs whose arrangement is not gentle.
We now show~this.

\begin{example}[Octahedron]\label{ex:octahedron1}
Consider the graph $G_{\text{oct}}$ of an octahedron, depicted in Figure~\ref{fig:oct}.
Let $R = \QQ[x_1,\dotsc, x_6]$.  The graphic
arrangement $\cA(G_{\text{oct}})$ consists of the  $12$ hyperplanes
\[
x_1-x_2, x_1-x_3, x_1-x_5, x_1-x_6, x_2-x_3, x_2 -x_4, x_2 - x_6,
x_3-x_4, x_3-x_5, x_4-x_5, x_4-x_6, x_5-x_6.
\]
The likelihood module has $12$ generators and $6$ relations,
of degrees one, two and three
(4 times), plus the Euler relation of degree
zero.  These relations correspond to the 7 generators of the
pre-likelihood ideal $I_0$.  A computation with \Mtwo shows that
\( I_0 : (x_1-x_2) \neq I_0 \).

\begin{figure}[htpb]
\centering
\begin{tikzpicture}[line join=bevel,z=-5.5,scale=1.5]
	\coordinate (A1) at (0,0,-1);
	\coordinate (A2) at (-1,0,0);
	\coordinate (A3) at (0,0,1);
	\coordinate (A4) at (1,0,0);
	\coordinate (B1) at (0,1,0);
	\coordinate (C1) at (0,-1,0);
	
	\draw (A1) -- (A2) -- (B1) -- cycle;
	\draw (A4) -- (A1) -- (B1) -- cycle;
	\draw (A1) -- (A2) -- (C1) -- cycle;
	\draw (A4) -- (A1) -- (C1) -- cycle;
	\draw [fill opacity=0.7,fill=gray!80!black] (A2) -- (A3) -- (B1) -- cycle;
	\draw [fill opacity=0.7,fill=gray!80!black] (A3) -- (A4) -- (B1) -- cycle;
	\draw [fill opacity=0.7,fill=gray!30!black] (A2) -- (A3) -- (C1) -- cycle;
	\draw [fill opacity=0.7,fill=gray!70!black] (A3) -- (A4) -- (C1) -- cycle;
	
	\node at (1.2, 0, 0) {4};
	\node at (-1.2, 0, 0) {1};
	\node at (0, 1.2, 0) {2};
	\node at (0, -1.2, 0) {5};
	\node at (-0.1, -0.1, 1.3) {3};
	\node at (0.1, 0.1, -1.2) {6};
\end{tikzpicture} \qquad \qquad
\begin{tikzpicture}[scale=1.7, every node/.style={circle, draw, fill=black, inner sep=2pt}]
\node[label=below left:$1$] (1) at (-1,0) {};
\node[label=above left:$5$] (4) at (-1/2,{sqrt(3)/2}) {};
\node[label=above right:$6$] (6) at (1/2,{sqrt(3)/2}) {};
\node[label=right:$4$] (3) at (1,0) {};
\node[label=below right:$2$] (2) at (1/2,{-sqrt(3)/2}) {};
\node[label=below left:$3$] (5) at (-1/2,{-sqrt(3)/2}) {};
\draw[thick] (1) -- (2) -- (3) -- (4) -- (1);
\draw[thick] (1) -- (5) -- (2);
\draw[thick] (1) -- (6) -- (2);
\draw[thick] (3) -- (5) -- (4);
\draw[thick] (3) -- (6) -- (4);   
\end{tikzpicture}
\caption{The octahedron and its edge graph.} 
\label{fig:oct}
\end{figure}
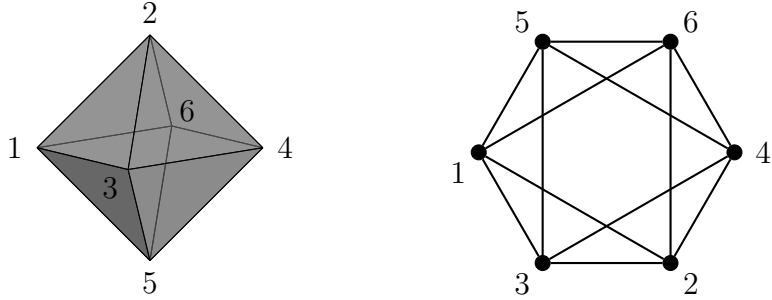

Proposition \ref{prop:localize} now tells us that
the graphic arrangement $\mathcal{A}(G_{\text{oct}})$ is not gentle.
Another computation shows that the ideal quotient
$I = I_0 : (x_1-x_2)$ is a prime ideal, and it hence equals
the likelihood ideal $I = I(\cA(G_{\text{oct}}))$.
The ideal $I$ differs from $I_0$ by only one additional generator $f \in R$ of
degree $
\begin{psmallmatrix}
  3\\
  3
\end{psmallmatrix}
$ with 3092 terms.  Computing $P = I_0 : f$ reveals the second minimal prime of
the pre-likelihood ideal $I_0$, and we obtain the prime decomposition
\[  \qquad I_0 \, = \, I \cap P, \quad {\rm where} \quad
P \,=\, \<\,\sum_{ij\in E}s_{ij}\,,\; x_1 - x_6,\, x_2 - x_6,\, x_3 - x_6,\, x_4 - x_6,\, x_5 -
x_6\>.
\]
The linear forms $x_i-x_6$ in $P$ generate the
irrelevant ideal for the ambient space $\PP^4$ of $\cA(G_{\text{oct}})$.
One can further compute that $\pd (\Omega^1(\mathcal{A}(G_{\text{oct}}))) = 2$, so
this arrangement is not tame either.
\end{example}

Example~\ref{ex:octahedron1} is uniquely minimal among non-gentle arrangements.

\begin{theorem} \label{thm:octahedron}
Consider the graphic arrangements for all graphs $G$ with $n \leq 6$ vertices.
With the exception of the octahedron graph, all of these arrangements are gentle.
\end{theorem}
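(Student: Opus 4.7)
The proof is a finite computer-assisted verification, so the plan is to enumerate all graphs $G$ with at most $6$ vertices up to isomorphism and, for each one, decide whether $\cA(G)$ is gentle. A first reduction: isolated vertices contribute no hyperplanes, and a disconnected graph gives a product arrangement whose likelihood module decomposes as a direct sum, so gentleness holds iff it holds on each connected component. We may therefore restrict to connected graphs, leaving $21$ graphs on $5$ vertices and $112$ graphs on $6$ vertices.

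Next I would dispose of the small-rank and chordal cases uniformly. Every graphic arrangement on $n \le 4$ vertices is essentially a linear arrangement in $\PP^{n-2} \subseteq \PP^{2}$, and as noted after Theorem~\ref{thm:tlaag} every linear arrangement in $\PP^{2}$ is tame; hence by Theorem~\ref{thm:tlaag} it is gentle. This settles $n \le 4$. For $n = 5, 6$, the Stanley--Edelman--Reiner theorem says that $\cA(G)$ is free whenever $G$ is chordal; free implies tame, and tame linear implies gentle by Theorem~\ref{thm:tlaag}. This collapses the list to the non-chordal graphs on $5$ and $6$ vertices, which is a short explicit collection: on $5$ vertices only $C_{4}$, $C_{5}$, and a handful of graphs containing $C_{4}$ as an induced subgraph; on $6$ vertices, the graphs containing an induced $C_{k}$ with $k \ge 4$, a manageable list whose centerpiece is the octahedron.

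For each remaining candidate $G$, the plan is to apply Proposition~\ref{prop:localize}: $\cA(G)$ is gentle if and only if the pre-likelihood ideal $I_{0}(\cA(G))$ is prime. I would compute $I_{0}$ using the \Mtwo package from Section~\ref{sec:software} and test primality directly. The verification output shows that $I_{0}$ is prime in every remaining case except $G_{\mathrm{oct}}$. Combined with Example~\ref{ex:octahedron1}, where the non-primality of $I_{0}(\cA(G_{\mathrm{oct}}))$ is exhibited explicitly and the extra generator of the likelihood ideal is identified, this proves the theorem.

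The main obstacle is purely computational: some of the non-chordal $6$-vertex cases yield ideals in $\CC[x_{1},\dots,x_{6},s_{1},\dots,s_{|E|}]$ with up to $21$ variables, where primality testing by Gr\"obner bases is expensive. Practical mitigations are to check primality after partial saturation (as in Proposition~\ref{prop:localize}, saturating at a single edge polynomial suffices to catch the octahedron obstruction) and to reuse resolutions across induced-subgraph relations rather than recomputing from scratch. With those optimizations the full enumeration becomes routine, and the only surviving witness to non-gentleness is $G_{\mathrm{oct}}$.
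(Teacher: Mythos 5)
Your proposal is correct and takes essentially the same route as the paper: the paper's proof of Theorem~\ref{thm:octahedron} is precisely an exhaustive \Mtwo computation over all graphs on at most six vertices, using the criterion of Proposition~\ref{prop:localize} (gentle $\iff$ $I_0(\cA)$ prime) and the software of Section~\ref{sec:software}, with the octahedron singled out as in Example~\ref{ex:octahedron1}. Your additional reductions (connected components, $n\le 4$ via tameness in $\PP^2$, and chordal graphs via Stanley--Edelman--Reiner plus Theorem~\ref{thm:tlaag}) are sound shortcuts that merely shrink the list the computer must check.
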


\begin{proof}
We prove this by exhaustive computation using our tools described in
Section~\ref{sec:software}.
\end{proof}

Except for $\cA(G_{\text{oct}})$, all graphic arrangements of graphs $G$ on fewer than
six vertices satisfy $\pd (\Omega^1(\mathcal{A}(G))) = 1$.  The
octahedron gives rise to more non-gentle graphic arrangements.

\begin{corollary} Any graph that contains the octahedron as an induced
subgraph is not gentle.
\end{corollary}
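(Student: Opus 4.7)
The plan is to derive non-gentleness of $\cA(G)$ from that of $\cA(G_{\text{oct}})$ via localization at the flat cut out by the induced octahedron. Let $V' = \{1,\ldots,6\}$ be the vertex set of the induced $G_{\text{oct}}$ in $G$, set $R = \CC[x_1,\ldots,x_n]$ and $R_0 = \CC[x_1,\ldots,x_6]$, and let $\mathfrak{p} = \ideal{x_i - x_j : i,j \in V'} \subset R$ be the prime ideal of the flat $X = \{x_1 = \cdots = x_6\}$, with $\mathfrak{p}_0 := \mathfrak{p} \cap R_0$.

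First I would identify the localized arrangement $\cA(G)_X = \{H \in \cA(G) : X \subseteq H\}$: a hyperplane $x_i - x_j$ of $\cA(G)$ contains $X$ exactly when $i,j \in V'$, and by the induced-subgraph hypothesis these are precisely the octahedron hyperplanes. So $\cA(G)_X$ coincides with $\cA(G_{\text{oct}})$, viewed in $\CC^n$. Next I would invoke the classical localization principle for linear arrangements: for any $H \notin \cA(G)_X$ the defining form $\alpha_H$ is a unit in $R_\mathfrak{p}$, rendering the condition $\theta(\alpha_H) \in \ideal{\alpha_H}$ vacuous there; this gives $\Der(\cA(G))_\mathfrak{p} \cong \Der(\cA(G)_X)_\mathfrak{p}$, and via Lemma~\ref{lem:modules} this promotes to an isomorphism $M(\cA(G))_\mathfrak{p} \cong M(\cA(G)_X)_\mathfrak{p}$ of likelihood modules. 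Since the hyperplanes of $\cA(G)_X$ only involve variables in $V'$, a coefficient-wise analysis of the log-derivation conditions gives $M(\cA(G)_X) \cong M(\cA(G_{\text{oct}})) \otimes_{R_0} R$, and the induced local ring map $(R_0)_{\mathfrak{p}_0} \to R_\mathfrak{p}$ is flat between local rings and hence faithfully flat.

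Then I would check that the failure of linear type exhibited in Example~\ref{ex:octahedron1} survives localization at $\mathfrak{p}_0$. The decomposition $I_0(\cA(G_{\text{oct}})) = I(\cA(G_{\text{oct}})) \cap P$ recorded there has $P = \ideal{\sum_{ij\in E}s_{ij},\,x_1-x_6,\,\ldots,\,x_5-x_6}$, so $\mathfrak{p}_0 \subseteq P$ and $P \cap R_0 = \mathfrak{p}_0$. For any $f \in I(\cA(G_{\text{oct}})) \setminus I_0(\cA(G_{\text{oct}}))$ we have $f \notin P$, and since $P$ is prime, no $r \in R_0 \setminus \mathfrak{p}_0$ (which lies outside $P$) can push $rf$ into $P$, hence not into $I_0(\cA(G_{\text{oct}}))$. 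Thus $f$ represents a nonzero element of the $R_0$-torsion of $\Sym(M(\cA(G_{\text{oct}})))$ surviving at $\mathfrak{p}_0$, so $M(\cA(G_{\text{oct}}))_{\mathfrak{p}_0}$ fails to be of linear type. Since $\Sym$ and $\cR$ commute with flat base change, the failure passes to $M(\cA(G)_X)_\mathfrak{p}$, and by the isomorphism of Step 2 to $M(\cA(G))_\mathfrak{p}$. As linear type is preserved by localization, $M(\cA(G))$ is not of linear type globally, so $\cA(G)$ is not gentle.

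The main obstacle lies in Step 2: carefully upgrading the classical $\Der$-localization isomorphism to one of likelihood modules via the presentation in Lemma~\ref{lem:modules} (showing that the generators of $M(\cA(G))$ indexed by hyperplanes outside $\cA(G)_X$ are killed after localization at $\mathfrak{p}$), together with the faithful-flatness check for $(R_0)_{\mathfrak{p}_0} \to R_\mathfrak{p}$ that carries the explicit torsion witness produced from Example~\ref{ex:octahedron1} into the ambient setting.
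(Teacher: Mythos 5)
Your overall strategy --- localize at the flat $X$ cut out by the induced octahedron and transfer the failure of linear type --- is exactly the route the paper takes: it proves Proposition~\ref{prop:local} (localizations of gentle hyperplane arrangements are gentle) and deduces the corollary by contraposition, using that an induced subgraph corresponds to a localization and that $\cA(G_{\text{oct}})$ is not gentle by Example~\ref{ex:octahedron1}. Several of your supporting steps are sound: the identification $\cA(G)_X=\cA(G_{\text{oct}})$, the argument via the prime $P$ that the torsion witness $f\in I\setminus I_0$ survives localization at $\mathfrak{p}_0$, the flatness of $(R_0)_{\mathfrak{p}_0}\to R_{\mathfrak p}$, and the fact that linear type is preserved under localization.

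The genuine gap is your Step 2. The isomorphism $\Der(\cA(G))_{\mathfrak p}\cong\Der(\cA(G)_X)_{\mathfrak p}$ does \emph{not} ``promote'' to an isomorphism $M(\cA(G))_{\mathfrak p}\cong M(\cA(G)_X)_{\mathfrak p}$; in fact that statement is false in general. The module $M(\cA(G))$ has rank $|E(G)|-n+1$ (for $G$ connected), whereas $M(\cA(G)_X)\cong M(\cA(G_{\text{oct}}))\otimes_{R_0}R$ has rank $12-5=7$, and rank does not change under localization; already for $G$ the octahedron plus one new vertex joined to two octahedron vertices the ranks are $8$ and $7$, so no such isomorphism exists. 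What one gets is only a surjection $M(\cA(G))_{\mathfrak p}\twoheadrightarrow M(\cA(G)_X)_{\mathfrak p}$ whose kernel is generated by the classes of the generators indexed by hyperplanes $H_i\not\supseteq X$, and these classes are in general \emph{not} killed by localizing at $\mathfrak p$. This is precisely where the paper's proof of Proposition~\ref{prop:local} has to work harder: it argues at the level of the pre-likelihood ideal localized at $\widetilde P=P+\langle s_1,\dots,s_m\rangle$, produces elements $s_i\,g/f_i\in I_0(\cA)_{\widetilde P}$ from auxiliary derivations $\theta_{H_i}$, and then uses primality of $I_0(\cA)$ --- i.e., the gentleness hypothesis available in that direction of the implication --- to conclude $s_i\in I_0(\cA)_{\widetilde P}$ and eliminate the extra $s$-variables. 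In your contrapositive set-up you cannot assume that primality, so this absorption step is unavailable, and the torsion-transfer argument does not go through as written: you would either have to establish the precise structure of the kernel (an extension of $M(\cA(G)_X)_{\mathfrak p}$ by the leftover generators) and prove that failure of linear type lifts along it, or simply invoke the contrapositive of Proposition~\ref{prop:local} as the paper does.
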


This is a corollary of Proposition \ref{prop:local}, which holds for
all hyperplane arrangements $\cA$, not just graphic ones.  We let
$L(\cA)$ denote the intersection lattice of the hyperplanes
$H_{i} = \{f_{i} = 0\}$ for $f_{i}\in \cA$.  If $X\in L(\cA)$ then the
\emph{localization} of $\cA$ at $X$ is
$\, \cA_X = \{f_{i} \in \cA : X \subseteq H_{i}\}$.  Any arrangement
of a vertex-induced subgraph is a localization in which $X$ is the
intersection over the $H_{i}$ corresponding to the edges of the
induced subgraph.

\begin{proposition}\label{prop:local}
  The localization of a gentle hyperplane arrangement is gentle.
\end{proposition}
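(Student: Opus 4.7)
The plan is to apply Proposition~\ref{prop:localize}, which reduces gentleness of $\cA_X$ to primality of its pre-likelihood ideal $I_0(\cA_X)$. I will identify the symmetric algebras of $M(\cA)$ and $M(\cA_X)$ after a suitable localization in $R$, then transfer primality through the geometric interpretation of the likelihood correspondence.

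Set $T = \prod_{H_i \in \cA \setminus \cA_X} f_i \in R$. In $R[T^{-1}]$ each such $f_i$ becomes a unit, so the condition $\theta(f_i) \in \langle f_i \rangle$ is automatic, whence
\[
\Der(\cA)[T^{-1}] \,=\, \Der(\cA_X)[T^{-1}].
\]
Therefore the coordinate projection $R[T^{-1}]^m \to R[T^{-1}]^k$ onto the entries indexed by $\cA_X$ sends $\logDer(\cA)[T^{-1}]$ onto $\logDer(\cA_X)[T^{-1}]$, inducing a surjection $M(\cA)[T^{-1}] \twoheadrightarrow M(\cA_X)[T^{-1}]$. Its kernel, as a submodule of $M(\cA)[T^{-1}]$, is generated by the classes $\bar e_i$ for $H_i \notin \cA_X$. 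Taking symmetric algebras and using Theorem~\ref{thm:twoIdeals} yields the presentation
\[
\Sym(M(\cA_X))[T^{-1}] \,\cong\, \Sym(M(\cA))[T^{-1}] \big/ \langle s_i : H_i \notin \cA_X \rangle.
\]

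By gentleness of $\cA$ and Theorem~\ref{thm:twoIdeals}, the ring $\Sym(M(\cA))[T^{-1}]$ is the coordinate ring of the integral scheme $\cL_\cA \cap \{T \neq 0\}$. Dividing out $\langle s_i : H_i \notin \cA_X\rangle$ amounts to intersecting $\cL_\cA \cap \{T \neq 0\}$ with the coordinate subspace $L = \{s_i = 0 : H_i \notin \cA_X\}$. Substituting $s_i = 0$ for $H_i \notin \cA_X$ in the critical equations $\nabla\ell_\cA = 0$ yields exactly $\nabla\ell_{\cA_X} = 0$, and the nonvanishing $f_i \neq 0$ for $H_i \notin \cA_X$ holds automatically on $\{T \neq 0\}$. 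Hence $(\cL_\cA \cap L)\cap\{T \neq 0\} = \cL_{\cA_X}\cap\{T \neq 0\}$ as sets, and the latter is irreducible by Lemma~\ref{lem:liklihoodIPrime}. A transversality argument, exploiting the linearity of the $f_i$ together with the vector-bundle structure of $\cL_\cA$ over $\PP^{n-1}$ from Lemma~\ref{lem:liklihoodIPrime}, then shows the intersection is reduced. Thus $\Sym(M(\cA_X))[T^{-1}]$ is a domain and $I_0(\cA_X)[T^{-1}]$ is prime.

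To descend from primality of $I_0(\cA_X)[T^{-1}]$ to that of $I_0(\cA_X)$ itself, one verifies $T$-saturation of $I_0(\cA_X)$: a putative $T$-torsion class would be represented by $v \in R^k$ with $Tv \in \logDer(\cA_X)$ but $v \notin \logDer(\cA_X)$; the equality $\Der(\cA_X)[T^{-1}] = \Der(\cA)[T^{-1}]$ permits adjusting the lifting derivation by one tangent to $X$ so that its coefficients become divisible by $T$, which contradicts the assumed torsion. Proposition~\ref{prop:localize} then yields that $\cA_X$ is gentle. The main obstacle is the reducedness claim in the geometric step: upgrading the set-theoretic equality of $(\cL_\cA \cap L)\cap\{T \neq 0\}$ with $\cL_{\cA_X}\cap\{T \neq 0\}$ to scheme-theoretic equality is where the linearity of the hyperplanes is essential, and is precisely the feature that fails for general hypersurface arrangements.
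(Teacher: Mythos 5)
Your opening reductions are fine: inverting $T=\prod_{H_i\notin\cA_X}f_i$ does give $\Der(\cA)[T^{-1}]=\Der(\cA_X)[T^{-1}]$, and by exactness of localization and right-exactness of $\Sym$ one indeed gets $\Sym(M(\cA_X))[T^{-1}]\cong \Sym(M(\cA))[T^{-1}]/\langle s_i: H_i\notin\cA_X\rangle$. The gap is the very next step, which carries all the content: you must show that the quotient of the domain $\Sym(M(\cA))[T^{-1}]$ by the nonzero elements $s_i$ is again a domain, and here your ``transversality argument'' is a placeholder, not a proof. Even the set-theoretic identification $(\cL_\cA\cap L)\cap\{T\neq 0\}=\cL_{\cA_X}\cap\{T\neq 0\}$ is not established: $\cL_\cA$ is a Zariski closure, so its points in $L\cap\{T\neq 0\}$ need not satisfy $\nabla\ell_{\cA_X}=0$ in any naive sense (limits may land on hyperplanes of $\cA_X$), and intersecting an irreducible variety with a coordinate subspace typically creates extra components and nonreduced structure; ruling these out is exactly the proposition. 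The paper's proof avoids this by localizing not merely at $T$ but at the prime $\widetilde P=\langle f_1,\dots,f_k\rangle+\langle s_1,\dots,s_m\rangle$ and proving that $s_i\in I_0(\cA)_{\widetilde P}$ for every $H_i\notin\cA_X$: choosing $\theta_{H_i}\in \Der_0(\cA\setminus f_i)\setminus \Der_0(\cA)$ gives $\theta_{H_i}(\ell_\cA)=s_i\,g/f_i$ with $0\neq g\in R$, and primality of $I_0(\cA)_{\widetilde P}$ (gentleness of $\cA$) plus the fact that this ideal contains no nonzero elements of $R$ forces $s_i$ into the localized ideal, so killing the $s_i$ changes nothing and the quotient is trivially a domain. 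After inverting only $T$ you obtain merely $s_i\,g\in I_0(\cA)[T^{-1}]$ with no way to cancel $g$, so your weaker localization does not capture this mechanism, and the reducedness/irreducibility you need is left unproved.

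The descent step has a second genuine gap. Saturation of $I_0(\cA_X)$ at $T$ is a statement about $T$-torsion of the entire symmetric algebra $R[s_1,\dots,s_k]/I_0(\cA_X)$ in every $s$-degree, whereas your argument only inspects $s$-degree one, i.e.\ torsion of the module $M(\cA_X)$ itself; Example~\ref{ex:34notgentle} shows that the difference between $I_0$ and its saturation can first appear in $s$-degree two, so degree-one torsion-freeness proves nothing, and the phrase ``adjusting the lifting derivation by one tangent to $X$'' is not an argument. Note also that, once $I_0(\cA_X)[T^{-1}]$ is known to be prime, $T$-saturatedness of $I_0(\cA_X)$ is essentially equivalent to the primality you are trying to establish, so it cannot be assumed or handled loosely; the paper instead identifies the localized ideal at $\widetilde P$ with the extension of $I_0(\cA_X)$ via the evaluation map of Theorem~\ref{thm:evaluation} together with $\Der(\cA)_P=\Der(\cA_X)_P$, and contracts back to $R[s_1,\dots,s_k]$ directly. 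Your incidental closing claim that linearity ``is precisely the feature that fails for general hypersurface arrangements'' also conflicts with the paper's remark that the argument extends beyond the linear case.
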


\begin{proof} 
Let $\cA$ be a gentle arrangement and $X \in L(\cA)$.
Suppose that $\cA_X = \{f_1,\dots,f_k\}$ and $\cA\setminus\cA_X = \{f_{k+1},\dots,f_m\}$.
Since the $f_{i}$ are linear, the following ideals are prime:
\[
P\,=\,\langle f_1,\dots,f_k\rangle \subset R \quad
{\rm and} \quad \widetilde{P} \,=\, P + \langle s_1,\dots,s_m\rangle \subset R\left[s_1,\dots,s_m\right].
\]
Since $I_0(\cA)$ is prime and $I_{0}(\cA)\subseteq \widetilde{P}$, the
localization $I_0(\cA)_{\widetilde{P}} \subset R[s]_{\widetilde{P}}$
is prime.  We claim
\begin{equation}\label{eq:localized_ideal}
I_0(\cA)_{\widetilde{P}} \,=\, \langle \theta(\ell_\cA) : \theta \in \Der(\cA)_P\rangle
\,=\,\langle \theta(\ell_\cA) : \theta \in \Der(\cA_X)_P\rangle.
\end{equation}
The first equality is by Theorem~\ref{thm:evaluation} since
localization is exact.
The second follows from $\Der(\cA)_P=\Der(\cA_X)_P$ which holds for
localizations of
arrangements~\cite[Example~4.123]{orlik2013arrangements}.

We now prove that $s_i\in I_0(\cA)_{\widetilde{P}} $ for all
$k+1\le i\le m$.  To this end, fix $s_i$, its corresponding linear
form $f_i$ and hyperplane $H_{i}=\{f_i=0\}$ for $k+1\le i\le m$.  By
Lemma~\ref{lem:modules} we have
$\Der(\cA)=R\theta_E\oplus \Der_0(\cA)$ where $\theta_E$ is
the Euler derivation and $\Der_0(\cA)$ is the submodule of
derivations annihilating all linear forms in~$\cA$.  As
$\Der_0(\cA)\subsetneq \Der_0(\cA\backslash f_{i})$ we can
choose
$\theta_{H_{i}}\in \Der_0(\cA\backslash f_{i})\setminus {
  \Der}_0(\cA)$.  Hence $\theta_{H_{i}}(f_i)=g$ for some nonzero
$g\in R$ and $\theta_{H_{i}}(f_j)=0$ for all $j\neq i$.  The
assumption $f_{i}\notin\cA_X$ yields
$\theta_{H_{i}} \in \Der(\cA_X)$.  Using
\eqref{eq:localized_ideal} we obtain
\[
\theta_{H_{i}}(\ell_\cA)\,=\,s_i\frac{g}{f_i}\in I_0(\cA)_{\widetilde{P}}.
\]
As $I_0(\cA)_{\widetilde{P}}$ contains no polynomials that lie in $R$,
we get $g/f_{i}\notin I_{0}(\cA)_{\widetilde{P}}$.  Thus
$s_i\in I_0(\cA)_{\widetilde{P}}$.  Then the quotient
$I_0(\cA)_{\widetilde{P}}/\langle s_i: k+1\le i\le m\rangle$ is also
prime and by \eqref{eq:localized_ideal} equals
\[
	\bigl\langle \theta(\ell_{\cA_X}):\theta \in \Der(\cA_X)_P\bigr\rangle
	\,\,\subset\,\, R[s_1,\dots,s_k]_{P+\langle s_1,\dots,s_k\rangle}.
\]
The preimage of this ideal in $R[s_1,\dotsc,s_k]$ is the
prime ideal~$I_{0}(\cA_{X})$.
Hence $\cA_X$ is gentle.
\end{proof}

This argument just made is  independent of $\cA$ being linear.
Hence, for any gentle arrangement of hypersurfaces $\cA$
and a prime ideal $P\subset R$ the subarrangement $\cA\cap P$ is gentle.

Since induced subgraphs give rise to localizations,
Proposition~\ref{prop:local} is one ingredient in the following
conjectural characterization of graphic arrangements that are gentle.

\begin{conjecture}\label{conj:gentle}
A graphic arrangement $\cA(G)$ is gentle if and only if $G_{\text{oct}}$ cannot be obtained from $G$ by a series of edge contractions of
an induced subgraph of $G$.
\end{conjecture}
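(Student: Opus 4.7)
The plan is to split the biconditional and establish two closure properties of the class of graphs $G$ for which $\cA(G)$ is gentle, matching the two operations in play (induced subgraph passage and edge contraction).

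For the forward direction ($\Rightarrow$): Proposition \ref{prop:local} already gives closure under induced subgraphs, since passing to a vertex-induced subgraph corresponds to localizing $\cA(G)$ at the flat defined by intersecting the retained coordinate-difference hyperplanes. The missing half is closure under edge contraction. Up to identifying coincident hyperplanes, $\cA(G/e)$ for $e=\{i,j\}$ is the restriction $\cA(G)^{H_e}$ of $\cA(G)$ to $H_e=\{x_i=x_j\}$, so the needed lemma is a \emph{restriction-gentleness theorem}: if $\cA$ is a gentle hyperplane arrangement and $H\in \cA$, then $\cA^H$ is gentle. I would attack this by tracking the presentation matrix $A$ of $M(\cA)$ under the quotient $R \to R/\langle f_H\rangle$, and then verify that the symmetric algebra of the restricted module remains torsion-free over $R/\langle f_H\rangle$; equivalently, that the pre-likelihood ideal of $\cA^H$ remains prime. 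Combined with Proposition \ref{prop:local}, both operations preserve gentleness, and since $\cA(G_{\text{oct}})$ is not gentle (Example \ref{ex:octahedron1}), no graph reducing to $G_{\text{oct}}$ through these operations can have a gentle arrangement.

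For the backward direction ($\Leftarrow$): Induct on $|V(G)|+|E(G)|$. The base case is given by chordal graphs, which are free by Stanley–Edelman–Reiner and hence gentle via Theorem \ref{thm:tlaag}. For the inductive step, I would seek a structural decomposition of non-chordal graphs in the class — for instance along a clique separator, or as a suitable parallel/series combination of smaller graphic arrangements lying in the class — together with an algebraic gluing statement that reassembles gentleness of $\cA(G)$ from that of its pieces. Theorem \ref{thm:octahedron} already verifies the conjecture up to six vertices and provides concrete evidence that such a decomposition should be achievable.

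The main obstacle is the backward direction. A clean structure theorem for graphs excluding $G_{\text{oct}}$ under the given contraction-of-induced-subgraph relation is not standard, and, more seriously, gentleness has no known clean combinatorial invariant — it is sensitive to algebraic data beyond the matroid, as illustrated by the open questions mentioned in Section \ref{sec:3adjectives} and by \cite{DGS}. The likely toolkit is the Fitting-ideal/depth criteria of Huneke and Simis–Vasconcelos sketched at the end of Section \ref{sec:3adjectives} (the $\mathcal{F}_k$ conditions), combined with combinatorial depth estimates on the intersection lattice of $\cA(G)$. Even the forward step — closure under edge contraction — is a genuinely new ingredient beyond the current paper, and I expect it will require careful bookkeeping of the Rees algebra of $M(\cA)$ under restriction; its proof would itself be a substantial result worth isolating.
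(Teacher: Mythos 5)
This statement is a \emph{conjecture} in the paper: the authors offer no proof, only supporting evidence (the exhaustive computation behind Theorem~\ref{thm:octahedron}, the localization result Proposition~\ref{prop:local}, and a discussion of why restrictions are the missing ingredient). So your text should be judged as a proof plan, and as such it contains gaps that you partly acknowledge but that are more serious than you indicate. The most concrete problem is your proposed ``restriction-gentleness theorem'' for the forward direction: the claim that for any gentle hyperplane arrangement $\cA$ and $H \in \cA$ the restriction $\cA^H$ is gentle is \emph{false}, and the paper itself proves this (the proposition following Conjecture~\ref{conj:gentle}, via the Edelman--Reiner arrangement of $21$ hyperplanes in $\PP^4$ whose restriction to the resonance-type arrangement of $15$ hyperplanes in $\PP^3$ is not gentle). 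What you would actually need is the much more delicate statement that restrictions of gentle \emph{graphic} arrangements are gentle, which is exactly the open obstacle the authors identify; they only remark that such restrictions ``could still be gentle'' because contraction preserves chordality, not that they are. So even the forward implication of the conjecture is not established by your outline.

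The backward direction is, as you say, entirely open: there is no structure theorem for graphs excluding $G_{\text{oct}}$ under contraction of induced subgraphs, no combinatorial characterization of gentleness to induct with, and your proposed induction has no actual gluing lemma --- chordal graphs as a ``base case'' do not exhaust the class, and the decomposition step is only a wish. In short, your proposal correctly identifies the two closure properties one would want (closure under localization, which is Proposition~\ref{prop:local}, and closure under restriction/edge contraction, which is open and false in the generality you state it), but it does not constitute a proof, and the paper does not claim one either.
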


This conjecture is supported by Theorem~\ref{thm:octahedron}.  A proof
would require not only localizations but also restrictions to a given
hyperplane which in the graphic case correspond to edge contraction.
For general linear arrangements, restrictions do not preserve
gentleness, though.

\begin{proposition}
Restrictions of gentle hyperplane arrangements need not be gentle.
\end{proposition}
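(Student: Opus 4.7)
Since the proposition is a negative existence claim, the plan is to exhibit a specific gentle hyperplane arrangement $\cA$ together with a hyperplane $H \in \cA$ whose restriction $\cA^{H}$, defined as $\{H \cap H' : H' \in \cA \setminus \{H\}\}$, fails to be gentle.

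I would first argue that the counterexample must lie outside the graphic class. The restriction of a graphic arrangement $\cA(G)$ along the hyperplane $x_i - x_j = 0$ coincides with the graphic arrangement $\cA(G/\{i,j\})$ of the edge-contraction. If the forward direction of Conjecture~\ref{conj:gentle} holds, then gentleness of $\cA(G)$ forbids any contraction path from $G$ to the octahedron, a property manifestly stable under further edge contractions; hence no graphic $\cA(G)$ can be gentle yet have a non-gentle graphic restriction. In particular, the smallest known non-gentle graphic arrangement, $\cA(G_{\mathrm{oct}})$ of Example~\ref{ex:octahedron1}, cannot arise as the restriction of a gentle graphic arrangement, and I would instead search among non-graphic linear arrangements.

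My strategy is to take $\cA(G_{\mathrm{oct}})$ as the prescribed restriction and engineer an ambient gentle arrangement $\widetilde{\cA}$ one dimension higher that realizes it. Concretely, I would adjoin an auxiliary coordinate $x_{7}$ and augment the twelve octahedron hyperplanes by a small number of additional linear forms involving $x_{7}$, chosen so that the hyperplane $H = \{x_{7} = 0\}$ has trace arrangement exactly $\cA(G_{\mathrm{oct}})$. Each additional hyperplane should either meet $H$ generically (contributing nothing new to $\cA^{H}$) or cut $H$ along a hyperplane already in $\cA(G_{\mathrm{oct}})$. For every candidate $\widetilde{\cA}$, the \Mtwo\ package of Section~\ref{sec:software} computes the pre-likelihood ideal $I_{0}(\widetilde{\cA})$, and by Proposition~\ref{prop:localize} gentleness is equivalent to primality of that ideal.

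The main obstacle is the search itself: generic extensions typically inherit the Rees-vs-symmetric gap of $\cA(G_{\mathrm{oct}})$ and remain non-gentle, while overly special extensions risk creating new coatoms on $H$ and so distorting $\cA^{H}$ away from the octahedron. The delicate point is thus to choose the extra hyperplanes special enough to avoid polluting the trace on $H$, yet generic enough to rigidify the likelihood module into linear type. Once a valid $\widetilde{\cA}$ has been identified and certified gentle by \Mtwo, combining this with the established non-gentleness of $\cA(G_{\mathrm{oct}})$ in Example~\ref{ex:octahedron1} yields the required counterexample.
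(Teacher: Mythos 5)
There is a genuine gap: the proposition is an existence claim, and your argument never actually exhibits a gentle arrangement with a non-gentle restriction. What you give is a search plan — adjoin a coordinate $x_7$ to the octahedron arrangement and hope that some choice of extra hyperplanes produces an ambient arrangement that \Mtwo{} certifies as gentle — but you offer no construction and no reason why the search must succeed. Whether $\cA(G_{\text{oct}})$ is the restriction of \emph{any} gentle arrangement is exactly the kind of question the proposition leaves open, so deferring it to an unperformed computation proves nothing. Two further weaknesses: your preliminary reduction (that no graphic example can exist) leans on the unproven forward direction of Conjecture~\ref{conj:gentle}; and the phrase ``meet $H$ generically (contributing nothing new to $\cA^{H}$)'' is not available for central arrangements, since every hyperplane $H'\neq H$ meets $H=\{x_7=0\}$ in a hyperplane of $H$, so every added form would have to be of the shape $x_i-x_j+c\,x_7$ to avoid polluting the trace — a severe constraint that makes the hoped-for ``rigidification'' of the likelihood module into linear type even less plausible.

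The paper avoids all of this by pointing to a known example: the Edelman--Reiner free arrangement of $21$ hyperplanes in $\PP^4$, which is gentle because free implies tame and, by Theorem~\ref{thm:tlaag}, tame linear arrangements are gentle; its restriction is the arrangement of $15$ hyperplanes in $\PP^3$ given by all subsums of $x_1+x_2+x_3+x_4$ (the $4$-dimensional resonance arrangement), and a direct \Mtwo{} computation shows $I_0(\cA):x_1\supsetneq I_0(\cA)$, so $I_0(\cA)$ is not prime and the restriction is not gentle by Proposition~\ref{prop:localize}. If you want to salvage your approach, you would need to either carry out and report a successful explicit construction (with the primality certificate for the ambient $I_0$), or switch to a known free-to-nonfree restriction pair such as the Edelman--Reiner one, where gentleness of the ambient arrangement comes for free from Theorem~\ref{thm:tlaag} rather than from an ad hoc computation.
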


\begin{proof}
Edelman and Reiner \cite{ER} constructed a free arrangement of
 $21$ hyperplanes in $\PP^4$ with a restriction to
 $15$ hyperplanes in $\PP^3$ which is not free.
 The linear forms in that nonfree arrangement $\cA$
 are all subsums of $x_1+x_2+x_3+x_4$ which is the $4$-dimensional resonance arrangement~\cite{Resonnace}.
 This $\cA$ is not tame.
 The pre-likelihood ideal
 $I_0(\cA)$ has five minimal generators. The ML degree is $51$.
 Using the \Mtwo tools in Section~\ref{sec:software},
 we find that the ideal quotient $I_0(\cA):x_1$ strictly contains $I_0(\cA)$.
 Therefore, $\cA$ is not gentle.
\end{proof}

Restriction of $\cA(G)$ at a hyperplane models contraction of an edge
in~$G$. This preserves chordality.  Thus restrictions of free graphic
arrangements are free by the characterization.  Therefore, every
restriction of a gentle graphic arrangement could still be gentle.

We now come to the second main result in this section,
a combinatorial construction of generators for
the pre-likelihood ideal $I_0(\cA(G))$ of any graph $G$. Consider the derivations
\[
\theta_k \,=\, x_1^{\, k}  \,\partial_{x_1} + 
x_2^{\, k} \,   \partial_{x_2} + \, \cdots \,+ x_n^{\,k}\,   \partial_{x_n} 
\qquad {\rm for} \quad k = 0,1,\ldots,n-1.
\]
Saito \cite{Saito} proved that  $\{\theta_0,\theta_1,\dotsc, \theta_{n-1} \}$ is a basis
of the free module $\Der(\cA(K_n))$. Before removing edges from $K_n$, it is instructive
to contemplate Theorem \ref{thm:evaluation} for Saito's derivations.

\begin{example}
The log-likelihood function for the braid arrangement $\cA = \cA(K_n)$ equals
\begin{equation}
\label{ex:Knloglike}
 \ell_\cA \quad = \,\sum_{1 \leq i < j \leq n} s_{ij} \cdot {\rm log}(x_i-x_j) .
 \end{equation}
By applying the derivation $\theta_k$ to that function, we obtain a polynomial
in $\CC[x,s]$, namely
\begin{equation}
\label{eq:braididealgens}
\theta_k (\ell_\cA) \,\,\, = \,
\sum_{1 \leq i < j \leq n}  \left(\,\sum_{\ell= 0}^{k-1} x_i^\ell \,x_j^{k-1-\ell} \,\right) \cdot s_{ij} .
\end{equation}
We know from Theorem \ref{thm:evaluation} that these polynomials
generate $I_0(\cA) $, and hence also the likelihood ideal $ I(\cA)$ as
$\cA$ is tame and thus gentle.  For $n=4$ see Examples \ref{ex:braid4}.
\end{example}

Now let $G = (V,E)$ be an arbitrary graph with vertex set $V = [n]$, and let
$\cA = \cA(G)$ be its graphic arrangement. The log-likelihood
function $\ell_\cA$ is  the sum in (\ref{ex:Knloglike}) but now
restricted to pairs $\{i,j\}$ in $E$.
The corresponding restricted sum in (\ref{eq:braididealgens}) still lies
in the ideal $I_0(\cA)$.

A subset $T$ of $[n]$ is a {\em separator} of $G$
if the induced subgraph on $[n] \backslash T$ is disconnected.
We denote this subgraph by $G \backslash T$,
and we consider any connected component $C$ of $G \backslash T$.
Following \cite{LM24}, we define the  \emph{separator-based derivation} associated to the data above:
\[
\theta_{C}^T \,\,\,= \,\,\sum_{i \in C} \prod_{t \in T} (x_i-x_t) \cdot \partial_{x_i}.
\]

The following theorem is implied by the main result in \cite{LM24} along with Theorem \ref{thm:evaluation}.

\begin{theorem} Let $G$ be a graph on $n$ vertices. The module
  $\Der(\cA(G))$ is generated by $\theta_0,\ldots,\theta_{n-1}$
  and a set of separator-based derivations.  Hence, $I_0(\cA)$ is
  generated by the images of $\ell_\cA$ under the derivations
  $\theta_k$ and $\theta_{C}^T$.
\end{theorem}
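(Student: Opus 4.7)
The plan is to combine the main result of~\cite{LM24}, which supplies a combinatorial generating set of $\Der(\cA(G))$, with Theorem~\ref{thm:evaluation}, which translates derivations into generators of the pre-likelihood ideal.

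For the first assertion I would import the main theorem of~\cite{LM24} as a black box: it states precisely that $\Der(\cA(G))$ is generated by the Saito derivations $\theta_0,\ldots,\theta_{n-1}$ together with the separator-based derivations $\theta_C^T$. Membership of these elements in $\Der(\cA(G))$ does not require~\cite{LM24}: each $\theta_k$ already lies in $\Der(\cA(K_n))\subseteq\Der(\cA(G))$ by Saito's theorem for the braid arrangement, and the $\theta_C^T$ can be verified directly. Given an edge $\{i,j\}\in E(G)$, one computes $\theta_C^T(x_i-x_j)$ by cases according to how $i$ and $j$ sit with respect to $C$ and $T$: if both endpoints lie in $C$ then $\prod_{t\in T}(x_i-x_t)-\prod_{t\in T}(x_j-x_t)$ vanishes at $x_i=x_j$ and is hence divisible by $x_i-x_j$; if exactly one endpoint lies in $C$ then the other must lie in $T$, since otherwise the edge $\{i,j\}$ would contradict $T$ being a separator, and $(x_i-x_j)$ appears explicitly as a factor in the resulting product; in the remaining case the action is zero. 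The substantive content of~\cite{LM24} is that these derivations actually span the whole module.

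For the second assertion I would invoke Theorem~\ref{thm:evaluation}: the evaluation map $\theta\mapsto\theta(\ell_\cA)$ carries $\Der(\cA)$ surjectively onto the bidegree-$(\ast,1)$ part of the pre-likelihood ideal. Because $I_0(\cA)=\langle(s_1,\ldots,s_m)\cdot A\rangle$ is by construction generated by elements that are linear in $s$, any $R$-module generating set of $I_0(\cA)_{(\ast,1)}$ lifts to an $R[s]$-ideal generating set of $I_0(\cA)$. Applying the evaluation map to the generators obtained in the previous paragraph therefore produces the claimed generators $\theta_k(\ell_\cA)$ and $\theta_C^T(\ell_\cA)$ of $I_0(\cA)$.

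The main obstacle is concentrated entirely in the cited result of~\cite{LM24}; producing the combinatorial description of $\Der(\cA(G))$ in terms of separator-based derivations is the nontrivial input, whereas the passage from that description to the stated ideal generators is a formal consequence of Theorem~\ref{thm:evaluation} and the defining shape of $I_0(\cA)$.
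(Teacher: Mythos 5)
Your proposal is correct and follows essentially the same route as the paper, which likewise derives the statement directly from the main result of \cite{LM24} combined with Theorem~\ref{thm:evaluation}. Your added case analysis verifying that the separator-based derivations $\theta_C^T$ lie in $\Der(\cA(G))$, and the observation that a generating set of $I_0(\cA)_{(\ast,1)}$ generates the ideal because $I_0(\cA)$ is generated in $s$-degree one, are sound and simply make explicit what the paper leaves implicit.
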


The generators in this theorem are redundant.
We do not need  $\theta_k$ if $k$ exceeds the connectivity of $G$,
and not all separator-based derivations $\theta_C^T$ are necessary
to generate $\Der(\cA(G))$ and thus $I_0(\cA)$.
It remains an interesting 
problem to extract minimal generators.

\begin{example}[Octahedron revisited]\label{ex:octahedron2}
In the case of $G_{\text{oct}}$ from Example~\ref{ex:octahedron1} 
it suffices to consider only (inclusionwise) minimal separators $T$; these are
$\{2,3,5,6\}$, $\{1,3,4,6\}$ and $\{1,2,4,5\}$. The connectivity of the graph is 4.  
The module $\Der(\cA(G_{\text{oct}}))$ is minimally generated by
the following eight derivations:
\[
\theta_0, \theta_1,\,\theta_2,\,\theta_3,\,\theta_4,\,\,
\theta_{\{1\}}^{\{2,3,5,6\}},\,\theta_{\{2\}}^{\{1,3,4,6\}},\,
\theta_{\{3\}}^{\{1,2,4,5\}}.
\]
Setting $z_{ij} \coloneqq x_i-x_j$, we infer the following set of minimal generators
 for the ideal $I_0(\cA)$:
\begin{align*}
\theta_k(\ell_\cA) &\,\, =  \sum_{(i,j) \in E} \left(\,\sum_{\ell= 0}^{k-1} x_i^\ell \,x_j^{k-1-\ell} \,\right) \cdot s_{ij} 
  \quad \mbox{ for }  k=1,\dots,4, \\
  \theta_{\{1\}}^{\{2,3,5,6\}}(\ell_\cA) & \,\,= \,\,z_{13}z_{15}z_{16}\cdot s_{12} + z_{12}z_{15}z_{16}\cdot s_{13}+z_{12}z_{13}z_{16}\cdot s_{15}+z_{12}z_{13}z_{15}\cdot s_{16},\\
  \theta_{\{2\}}^{\{1,3,4,6\}}(\ell_{\cA}) & \,\,= \,\,z_{23} z_{24}z_{26}\cdot s_{12} + z_{21} z_{24}z_{26}\cdot s_{23}+z_{21} z_{23}z_{26}\cdot s_{24}+ z_{21} z_{23}z_{24}\cdot s_{26},\\
  \theta_{\{3\}}^{\{1,2,4,5\}}(\ell_\cA) &\,\, =\,\,z_{32}z_{34}z_{35}\cdot s_{13} + z_{31}z_{34}z_{35}\cdot s_{23}+z_{31}z_{32}z_{35}\cdot s_{34}+ z_{31}z_{32}z_{34}\cdot s_{35}.
\end{align*}
These seven generators are linear in $s$ and they have the $x$-degrees
stated in Example \ref{ex:octahedron1}.  Since
$\theta_0 (\ell_\cA) = 0$, this generator of $\Der(\cA(G_{\text{oct}}))$ does
not yield a generator of~$I_0(\cA)$.
\end{example}

\section{Software and computations}
\label{sec:software}

We have implemented functions
in \Mtwo which compute the
pre-likelihood ideal $I_0(\cA)$
and the likelihood ideal $I(\cA)$
for any arrangement~$\cA$.
The input consists of $m$ homogeneous
polynomials $f_1,\ldots,f_m$ in 
$n$ variables $x_1,\ldots,x_n$.
Along the way, our code creates 
the four polynomial modules
seen in Section \ref{sec:arrangements},
and it also computes the relevant multidegrees.

Our code is made available, along with various examples, in the {\tt
  MathRepo} collection at MPI-MiS via
\url{https://mathrepo.mis.mpg.de/ArrangementsLikelihood}.  In this
section we offer a guide on how to use the software. We present
three short case studies that are aimed at readers from hyperplane arrangements, algebraic statistics, and
particle physics.

We start with the function ${\tt preLikelihoodIdeal}$.  Its input is a
list {\tt F} of $m$ homogeneous elements in a polynomial ring {\tt
  R}. The list {\tt F} defines an arrangement $\mathcal{A}$ in
$\PP^{n-1}$.  Our  code augments the given ring {\tt R} with additional
variables ${\tt s}_1, {\tt s}_2, \ldots, {\tt s}_m$, one for each
element in the list {\tt F}, and it outputs generators for the
pre-likelihood ideal $I_0(\cA)$.  We can then analyze that output and
test whether it is prime, in which case $I_0(\cA) = I(\cA)$.  Our code
also has a function ${\tt likelihoodIdeal}$ which computes $I(\cA)$
directly even if $\cA$ is not gentle.

\begin{example} Revisiting Example  \ref{ex:fourconics}, we consider an arrangement
 $\cA$ of four conics and one line in 
 the projective plane $\PP^2$. We compute its pre-likelihood ideal $I_0(\cA)$ as follows:
 \begin{verbatim}
R = QQ[x,y,z];
F = {x^2+y^2+z^2, x^2+2*y*z-z^2, y^2+2*z*x-x^2, z^2+2*x*y-y^2, x+y+z};
I = preLikelihoodIdeal(F)  \end{verbatim}
The ideal $I_0(\cA)$ has seven minimal generators, starting with
$ 2s_1+2s_2+2s_3+2s_4+s_5$. Our choice of $\cA$ exhibits the generic
behavior in Example \ref{ex:fourconics}. In particular,
the ML degree is~$25$.  Running \verb|codim I, multidegree I, betti mingens I| computes the codimension~$3$, the multidegree $25p^{2}u + 6pu^{2} + u^{3}$ and the total degrees of minimal generators.
A following \verb|isPrime I| returns \verb|true|, which proves
that the arrangement $\cA$ is indeed gentle.

\end{example}

We now turn to our case studies. The first is 
a non-gentle arrangement of planes in $\PP^3$.

\begin{example}
The following arrangement with $m=9$ is due to Cohen et al.~\cite[Example~5.3]{CDFV}:
\begin{verbatim}
R = QQ[x1,x2,x3,x4];
F = {x1,x2,x3,x1+x4,x2+x4,x3+x4,x1+x2+x4,x1+x3+x4,x2+x3+x4}
ass preLikelihoodIdeal F
I = likelihoodIdeal F;
codim I, multidegree I, betti mingens I, isPrime I
\end{verbatim}
We obtain $I(\cA)$  from $I_0(\cA)$ by
removing the associated prime
$\langle s_1 + s_2 + \cdots + s_9,x_1,x_2,x_3,x_4 \rangle$.
The likelihood ideal $I(\cA)$ has six minimal generators, and
$[\mathcal{L}_\cA] =  5 p^3 u + 9 p^2 u^2 + 5 p u^3 + u^4$.
\end{example}

\begin{example}[No 3-way interaction]\label{no3waygentle}
A model for three binary random variables is given~by
\[
p_{ijk} \,=\, a_{ij}b_{ik}c_{jk} \qquad {\rm for} \,\,  i,j,k \in \{0,1\}.
\]
This parametrizes the toric hypersurface
$\{p_{000}p_{110}p_{101}p_{011} = p_{100}p_{010}p_{001}p_{111}\} \subset \PP^7$.
This toric model fits into our framework by 
setting $m=9$, and considering the $n=12$ parameters
\[
x \,\,= \,\,( a_{00}, a_{10}, a_{01}, a_{11}, b_{00}, \dotsc, b_{11}, c_{00},
\dotsc, c_{11}).
\]
We take $\cA$ to be the $12$ coordinate hyperplanes
$a_{00},a_{10}, \ldots, c_{11} $ together with 
\[
  f(x) \,\,= \,\,
  a_{00} b_{00} c_{00} + 
  a_{00} b_{01} c_{01} + 
  a_{01} b_{00} c_{10} + 
  a_{01} b_{01} c_{11} + 
  a_{10} b_{10} c_{00} + 
  a_{10} b_{11} c_{01} + 
  a_{11} b_{10} c_{10} + 
  a_{11} b_{11} c_{11} .
\]
The pre-likelihood ideal $I_0(\cA)$ has $25$ minimal primes, so the
arrangement is far from gentle. 
The likelihood ideal $I(\cA)$ can be computed for this model as follows:
perform the
saturation $ I_{0}(\cA) : a_{00}f^{2}$ and check that this ideal is
prime. We found this to be the fastest method.

An alternative parametrization of the model with only seven parameters $x_i$
is given by
\[
g(x) \,\, = \,\,
x_1^6+x_1^5x_2+x_1^5x_3+x_1^5x_4+x_1^3x_2x_3x_5
+x_1^3x_3x_4x_6+x_1^3x_2x_4x_7+x_2x_3x_4x_5x_6x_7.
\]
The arrangement $\cA' = \set{x_1,\dotsc, x_7, g(x)}$ is also not
gentle.  The ideal $I_0(\cA')$ has $19$ generators.
The likelihood ideal is $I_{0}(\cA'):x_{1}x_{2}x_{3}x_{4}x_{5}$.  It
has 48 generators in various degrees, some of which are quartic in the
$s$-variables.  The multidegree
$3p^6u+13p^5u^2+25p^4u^3+30p^3u^4+18p^2u^5+6pu^6+u^7$ reveals the
correct ML degree of $3$, known from \cite[Example~32]{ABB}.
\end{example}

\begin{example}[CEGM model]\label{ex:cegm}
Consider the moduli space of six labeled point in linearly general
position in $\PP^2$. This very affine variety arises in the CEGM model in particle physics~\cite{CEGM}.
We write this as the projective
arrangement  $\cA$ with $m=15$ and $n=5$ given
by the $3 \times 3$ minors of the $3 \times 6$~matrix 
\[
  \begin{bmatrix}
    1 & 0 & 0 & 1 & 1 & 1 \\
    0 & 1 & 0 & 1 & x_1 & x_2 \\
    0 & 0 & 1 & 1 & x_3 & x_4 \\
  \end{bmatrix}.
\]
Using $x_5$ for the homogenizing variable, we compute the
pre-likelihood ideal $I_0(\cA)$ as follows:
\begin{verbatim}
R = QQ[x1,x2,x3,x4,x5];
F = {x1,x2,x3,x4,x5,x1-x2,x1-x3,x1-x5,x2-x5,x2-x4,x3-x4,x3-x5,x4-x5,
     x1*x4-x2*x3,x1*x4-x2*x3-x1+x2+x3-x4};
I0 = preLikelihoodIdeal F;
\end{verbatim}
The ideal $I_{0}$ of this arrangement is simple to define, having only
6 generators of degrees $
\begin{psmallmatrix}
  2\\
  1
\end{psmallmatrix}
$ (twice)
and $
\begin{psmallmatrix}
  3\\
  1
\end{psmallmatrix}
$ (four times).  However, due to their size, computing one
Gröbner basis of this ideal is already challenging. Numerically we obtain that $I_0$ has 25 associated primes.
\end{example}

\bigskip
\begin{small}
\noindent {\bf Acknowledgements.}
TK is supported by the Deutsche Forschungsgemeinschaft (DFG, German Research Foundation) within GRK 2297 ``MathCoRe''-- 314838170.
TK and LK are supported by the DFG SPP 2458 ``Combinatorial Synergies'' -- 539866293.
LK and LM are supported by the DFG  TRR 358 -- 491392403.
Part of the research was carried out while LK was a member at the Institute for Advanced Study. His stay was funded by the Erik Ellentuck Fellow Fund.

The authors thank Hal Schenck and Julian Vill for helpful discussions.
Last but not least we would like to thank the anonymous referee for several useful suggestions that helped to improve the exposition of the article.
\end{small}

\bigskip

\noindent
\footnotesize {\bf Authors' addresses:}

\noindent Thomas Kahle, OvGU Magdeburg, Germany,
{\tt thomas.kahle@ovgu.de}

\noindent Lukas Kühne, Universität Bielefeld, Germany,
{\tt lkuehne@math.uni-bielefeld.de}

\noindent Leonie Mühlherr, Universität Bielefeld, Germany,
{\tt lmuehlherr@math.uni-bielefeld.de}

\noindent Bernd Sturmfels, MPI-MiS Leipzig, {\tt bernd@mis.mpg.de} and
UC~Berkeley, {\tt bernd@berkeley.edu}

\noindent Maximilian Wiesmann, CSBD Dresden, {\tt wiesmann@pks.mpg.de}

\end{document}